\renewcommand{\Re}{\operatorname{Re}}
\newcommand{\e}{\epsilon}
\renewcommand{\(}{\left\(}
\renewcommand{\)}{\right\)}
\renewcommand{\[}{\left\[}
\renewcommand{\]}{\right\]}
\numberwithin{equation}{section}
\theoremstyle{plain}
\newtheorem{theorem}{Theorem}[section]
\newtheorem{lemma}[theorem]{Lemma}
\newtheorem{corollary}[theorem]{Corollary}
\newtheorem{proposition}[theorem]{Proposition}
\newtheorem{remark}[]{Remark}
\def\proof{\@ifnextchar[{\@oproof}{\@nproof}}
\def\@oproof[#1][#2]{\trivlist\item[\hskip\labelsep\textit{#2 Proof of\
		#1.}~]\ignorespaces}
\def\@nproof{\trivlist\item[\hskip\labelsep\textit{Proof.}~]\ignorespaces}
\def\@tocline#1#2#3#4#5#6#7{\relax
	\ifnum #1>\c@tocdepth 
	\else
	\par \addpenalty\@secpenalty\addvspace{#2}%
	\begingroup \hyphenpenalty\@M
	\@ifempty{#4}{%
		\@tempdima\csname r@tocindent\number#1\endcsname\relax
	}{%
		\@tempdima#4\relax
	}%
	\parindent\z@ \leftskip#3\relax \advance\leftskip\@tempdima\relax
	\rightskip\@pnumwidth plus4em \parfillskip-\@pnumwidth
	#5\leavevmode\hskip-\@tempdima
	\ifcase #1
	\or\or \hskip 1em \or \hskip 2em \else \hskip 3em \fi%
	#6\nobreak\relax
	\dotfill\hbox to\@pnumwidth{\@tocpagenum{#7}}\par
	\nobreak
	\endgroup
	\fi}
\begin{document}
	\title[Kronecker type limit formulas and Special values]{Mordell--Tornheim zeta function: Kronecker limit type formulas and Special values}
	\author{Sumukha Sathyanarayana}
	\author{N. Guru Sharan }
	\thanks{2020 \textit{Mathematics Subject Classification.} Primary 11M32, 30D30; Secondary 39B32.\\
	\textit{Keywords and phrases.} Mordell-Tornheim zeta function, Kronecker limit formula, special values, points of indeterminacy, series evaluation.\\
	Abbreviated title: Kronecker type limit formulas and Special values}
	\address{Department of Mathematics, Central University of Karnataka, Kadaganchi, Kalaburagi, Karnataka-585367, India.}
	\email{sumukhas@cuk.ac.in, neerugarsumukha@gmail.com}
	\address{Department of Mathematics, Indian Institute of Technology Gandhinagar, Palaj, Gandhinagar, Gujarat-382355, India}
	\curraddr{Department of Mathematics, Harish-Chandra Research Institute, a CI of Homi Bhabha National Institute, Prayagraj, Uttar Pradesh-211019, India.}
	\email{gurusharan@hri.res.in, sharanguru5@gmail.com}

	\begin{abstract}
 	In this paper, we establish Kronecker limit type formulas for the generalized Mordell--Tornheim zeta function $\Theta(r,s,t,x)$ as a function of the third variable, in terms of Riemann-zeta and Gamma values.  We also give series evaluations of $\Theta(r,s,t,x)$ in terms of Herglotz-Zagier type functions, and their derivatives. As applications of this, we derive Kronecker limit type formula in the second variable and a new infinite family of modular relations called mixed functional equations. We also study the zeroes, special values and singularities of the above function when all its arguments $r,s$ and $t$ are equal, which builds on a few earlier results due to Romik.
	\end{abstract}
	\maketitle
	\tableofcontents
	\section{Introduction}
\noindent Let us denote
\begin{align}
	\mathscr{D}:=\bigg\{(r,s,t) \in \mathbb{C}^3 \ \bigg| \Re(r+t)>1, \Re(s+t)>1 , \Re(r+s+t)>2\bigg\}. \label{Dregion}
\end{align}
 In the region $\mathscr{D},$ the Mordell-Tornheim zeta function is given by the absolutely convergent series  
	\begin{align*}
	\zeta_{\textup{MT}} (r,s,t):=\sum_{n=1}^{\infty} \sum_{m=1}^{\infty} \frac{1}{n^{r} m^{s} (n+m)^{t}}.
	\end{align*}
The function $\zeta_{\textup{MT}} (r,s,t)$ can be meromorphically continued, using the Mellin-Barnes method, to the entire $\mathbb{C}^3$ space, with its singular hyperplanes given by 
\begin{equation} \label{pole conditions}
	\left.
	\hspace{5 cm}
	\begin{aligned}
		&r+t=1-\ell, \\
		&s+t=1-\ell, \\
		&r+s+t=2,\\
	\end{aligned}
	\hspace{5 cm} \right\}
\end{equation}
for any $\ell \in \mathbb{N}\cup\{0\}$;
refer to Matsumoto's paper \cite{matsumoto} for finer details of the hyperplanes. The behavior of $\zeta_{\textup{MT}} (r,s,t)$ around its singularities is largely unexplored. In a recent work with Dixit \cite{dss2}, the authors considered one of its generalization
\begin{align}
	\Theta(r,s,t,x):=\sum_{n=1}^{\infty}  \sum_{m=1}^{\infty}\frac{1}{n^{r}m^{s}(n+mx)^{t}}, \label{Tdef}
\end{align}
for $x>0$ and $(r,s,t) \in \mathscr{D}$, defined in \eqref{Dregion}. Clearly, $\Theta(r,s,t,1) =\zeta_{\textup{MT}} (r,s,t)$. One can easily observe that $\Theta(r,s,t,x)$ satisfies the following functional equations for $x>0$ and $(r,s,t) \in \mathscr{D}$:
\begin{align}
	&\Theta(r,s,t,x) =  \Theta(r-1,s,t+1,x)+x \hspace{0.1 cm} \Theta(r,s-1,t+1,x). \label{Tsplit}  \\
	&	\Theta(r,s,t,x)= x^{-t} \Theta(s,r,t,\tfrac{1}{x}).  \label{Tinv}
\end{align} 
The meromorphic continuation of $\Theta(r,s,t,x)$ can be obtained similarly as in the case of $\zeta_{\textup{MT}} (r,s,t)$ \cite[Theorem 1]{matsumoto}. Recently, it has been shown that \cite[Theorem 1.1]{dss2}, as $t \to 0$,
	\begin{align}\label{principal part equation}
		\Theta(1,1,t,x)=\frac{2}{t^2}+\frac{2\gamma-\log(x)}{t}+\gamma^2-\gamma\log(x)-\frac{\pi^2}{6}+O(|t|),
	\end{align}
	where $\gamma$ is Euler's constant. Results of this kind did not exist in the literature prior to this. Moreover, one can also derive the higher Laurent coefficients of 	\eqref{principal part equation} as a finite sum of Arakawa and Kaneko constants \cite[Corollary 5]{arakawa-kaneko} defined by
	\begin{equation*}
		c_k:=\frac{1}{k!}\int_{0}^{\infty}\left\{\log^{k}(y)-\log^{k}\left(\frac{y}{1-e^{-y}}\right)\right\}\left(\frac{1}{e^y-1}-\frac{1}{y}\right)\, dy, 
	\end{equation*}
and integrals of the form 	
\begin{align*}
	L_k^*(x):=	\sum_{j=0}^{k}a_{k+1, j}\vspace{1mm}	\int_{0}^{\infty} \frac{\log^{j}(y)}{y} \log \left( \frac{1-e^{-xy}}{1-e^{-y}} \right) \log \left( \frac{1-e^{-y}}{y}  \right) \, dt 
\end{align*}
for $ k \in \mathbb{N}$. For finer details, see the proofs of Theorem 3.4 and Proposition 3.7 in \cite{dss2}. Recently, Matsumoto, Onodera and Sahoo, have derived a asymptotic result for the Mordell-Tornheim multiple-zeta function of higher depts \cite{dilip}. 

 We call the results of the kind \eqref{principal part equation} as the Kronecker limit type formulas. Siegel \cite{siegel} connected the Kronecker limit formula with the theory of $L$-functions. Over the century, several prominent mathematicians such as Hecke \cite{hecke}, Herglotz \cite{herglotz}, Ramachandra \cite{rama}, Stark \cite{stark} and Zagier \cite{zagier} gave important contributions to this field. Note that \eqref{principal part equation} is a Kronecker limit type formula for the function $\Theta(1,1,t,x)$ in the variable $t$, around $t=0$.

In Section \ref{Sec2}, we give Kronecker limit type formulas for $\Theta(r,s,t,x)$ in $t$, i.e., the \emph{third variable}, in Theorem \ref{1stpolar}. For a fixed $r,s \in \mathbb{C}$, only possibilities of poles for $\Theta(r,s,t,x)$ in $t$ are at $t=1-r-\ell$, $t=1-s-\ell$, for $\ell \in \mathbb{N} \cup\{0\}$ and $t=2-r-s$. However, we show that all of these points need not have a pole necessarily. As a direct consequence of our Kronecker limit type formula, the singular part of the Laurent series of $\Theta(r,s,t,x)$ in $t$ is established. Interestingly, we show that the order of pole too varies by cases. $\Theta(r,s,t,x)$ can have a single or a double pole, or could also have no pole in $t$, depending on the choices of $r$ and $s$. This method cannot be used to obtain the Kronecker limit type formula for the $\Theta(r,s,t,x)$ in the \emph{second variable}. 

In the Section \ref{Section4}, we give the series evaluations of $\Theta(r,s,t, x)$ in terms of zeta values, $\psi(x)$ and its derivatives, in Theorem \ref{THM-EVAL}. As an application of our results, we use Theorem \ref{THM-EVAL} to get \emph{mixed functional equations} in Theorem \eqref{MIXEDthm} as a corollary, which have not appeared in the literature. We call them mixed functional equations since they contain terms of the form both 
\begin{align*}
	\sum_{m=1}^{\infty} \frac{\psi^{(a)}(mx+1)}{m^{b}}\hspace{1cm}\text{ and} \hspace{1cm}
	\sum_{m=1}^{\infty} \frac{\gamma + \psi(mx+1)}{m^{c}},
\end{align*}
for $a,b,$ and $c \in \mathbb{N}$, which appear in the Guinand's \cite{apg3} and the Vlasenko-Zagier's \cite[Proposition 4]{vz} functional equations, respectively. As explained in Remark \ref{REMARK1}, of the infinite family of new identites we obtain, we now illustrate one below, 
\begin{align}
	-\sum_{m=1}^{\infty} \frac{\psi'(mx+1)}{m^2} + 	\sum_{m=1}^{\infty} \frac{\psi'(\frac{m}{x}+1)}{m^2} + \frac{2}{x} \sum_{m=1}^{\infty} \frac{\gamma+\psi(mx+1)}{m^3} = \zeta^2(2). \label{newIDex}
\end{align} 

Also as a corollary of Theorem \ref{THM-EVAL}, we derive a Kronecker limit type formula, as stated in Theorem \eqref{KLF in s}, for $\Theta(r,s,t, x)$ in the \emph{second variable}.

As introduced by Zagier \cite[Section 7]{zagier-witten}, a specific case of the Mordell-Tornheim zeta function $\zeta_{\textup{MT}}(r,s,t)$ appears as the \textit{Witten-zeta function} of the Lie Algebra $A_2=\textup{sl}(3,\mathbb{C})$. This has recently been studied in detail by Romik \cite{romik}. He studied the Witten-zeta function associated to the group $SU(3)$, denoted by $\omega(s)$, given by,
\begin{align}
	\omega(s) :=  \sum_{j,k\geq 1} (2 \textup{ dim } W_{j,k})^{-s} = \sum_{j,k\geq 1} \frac{1}{(jk(j+k))^s} = \zeta_{\textup{MT}}(s,s,s) = \Theta(s,s,s,1). \label{w defn}
\end{align}
He further showed the following \cite[Theorems 1.2 and 1.3]{romik}:
\begin{enumerate}[label=\roman*.]
	\item The series in \eqref{w defn} is absolutely convergent for Re$(s)>2/3$, and defines a holomorphic function in the region.
	\item $\omega(s)$ can be meromorphically extended to a function on $\mathbb{C}$.
	\item  $\omega(s)$ has a simple pole at $s=2/3$ with residue $\frac{1}{2\pi \sqrt{3}} \ \Gamma^3 \hspace{-0.9 mm}\left( \frac{1}{3} \right)$, a simple pole at $s=1/2-k$ (for any non-negative integer $k$) with residue $(-1)^k 2^{-4k} \binom{2k}{k} \zeta\left( \frac{1-6k}{2} \right)$. Also, $\omega(s)$ has no other points of singularity. 
	\item It has the following special values at  non-positive integer arguments:
	\begin{align}
		\omega(0) &= \frac{1}{3}, \label{sv omega1} \\
		\omega(-n) &= 0, \qquad (\textup{for any positive integer } n). \label{sv omega2}
	\end{align}
\end{enumerate}

Onodera showed the existence of a refinement in the zeros of $\omega(s)$. As given in \cite[Proposition 4.4]{onodera}, it is known that $\omega(s)$ has a simple zero at odd negative integers but has a zero of order $2$ at the even negative ones. Very recently, a much general structure for the negative even zeros has been proven \cite{KCA}. It was shown that the Witten-zeta function $\zeta_{\Phi}(s)$, for a root system $\Phi$ associated to a simple Lie algebra $\mathfrak{g}$, has a zero at $s=m$ (any negative even integer $m$) of order as follows \cite[Theorem 3.6]{KCA}:
\begin{align*}
	\textit{ord}_{s=m} (\zeta_{\Phi}(s)) \geq \textup{ Rank of }\Phi.
\end{align*}  

It remains curious to understand reasons for the difference in the order of zeros, depending on the parity. With this literature in the background, we study the zeros (and other special values) of the function $\Theta(s,s,s,x)$, which is by itself a generalization of $\omega(s)$, in Section \ref{RomikSection}. We prove a new parity based stark difference in Theorem \ref{SV}, that $\Theta(s,s,s,x)$ still vanishes at negative odd integers $s$, but is non--zero for negative even integers $s$. 

One might have observed that the argument (say $n=2$) in \eqref{sv omega2} lies of on the hyperplanes listed in \eqref{pole conditions}. In the case of meromorphic functions of multiple complex variables, the singularities could either be poles (where the magnitude blows up) or \textit{points of indeterminacy} (where different limiting values could be obtained by approaching the singularity in different directions); refer to \cite[Chapter 15]{Gauthier} and \cite[Section 4.2.2]{boas} for detailed explanation.

Special values at few points of indeterminacy of $\zeta_{\textup{MT}}(r,s,t)$ have been studied before; for instance, one could read \cite[Section 6]{Komori} and \cite[Section 3]{AET} and their citations for existing literature on this topic. All the special values listed below for $\Theta(r,s,t,x)$ are at the points of indeterminacy. \textit{One thus needs to fix a way of approaching the singularity}, hence we stick to the following limiting approach throughout Section \ref{RomikSection}; for a point of indeterminacy $(s_0, s_0, s_0,x)$, we define
\begin{align}
	\Theta(s_0, s_0, s_0, x)_{L_1} &:= \lim_{s \to s_0} \Theta(s,s,s,x). \label{sense}
\end{align}
The above equality can also be understood as the limit of $\Theta(s,s,s,x)$, as a function of one complex variable $s$. For instance, in the sense of \eqref{sense}, we list the first four non-zero special values below:
\begin{equation} \label{svT3eg}
	\left.
	\hspace{0.5 cm}
	\begin{aligned}
		\Theta(0,0,0,x)_{L_1} &= \frac{1+6x+x^2}{24x},\\
		\Theta(-2,-2,-2,x)_{L_1} &= \frac{-1+2x^4-x^8}{14400x^3}, \\
		\Theta(-4,-4,-4,x)_{L_1} &= \frac{1-x^6-x^8+x^{14}}{15120 x^5}, \\
		\Theta(-6,-6,-6,x)_{L_1} &= \frac{-174611 + 83611 x^8 + 182000 x^{10} + 83611 x^{12}- 174611 x^{20}}{158558400 x^{7}}.	\\
	\end{aligned}
	\hspace{0.6 cm}
	 \right\}
\end{equation}
We can see that $\Theta(0,0,0,1)_{L_1}=1/3$, which has been shown before by Romik, as explained in \eqref{sv omega1}. Observe that the right-hand side of the above expressions \eqref{svT3eg}, except the first one, vanish upon letting $x=1$. This phenomenon is explained in Remark \ref{RMK-jzero}, which indirectly establishes the above mentioned result of Romik \eqref{sv omega2}. In Corollary \ref{corollary0}, we further more prove a larger class of zeroes of $\Theta(r,s,t,x)$.

	We can also specialize values in Theorems \ref{1stpolar} and \ref{3rdpolar} to get numerous special values with the limiting approach 
\begin{align}
\Theta(r_0,s_0,t_0,x)_{L_2} := \lim_{t \to t_0} \Theta(r_0,s_0,t,x). \label{sense2}
\end{align} 
For instance, in the sense given in \eqref{sense2}, we can obtain other non-zero special values using \eqref{T1C3}, as listed below:
\begin{equation} \label{svT4eg}
	\left.
	\hspace{0.5 cm}
	\begin{aligned}
		\Theta(0,0,0,x)_{L_2} &= \frac{2+6x+2x^2}{24x}, \\
		\Theta(-2,-2,-2,x)_{L_2} &= \frac{-2+2x^4-2x^8}{14400x^3}, \\
		\Theta(-4,-4,-4,x)_{L_2} &= \frac{2-x^6-x^8+2x^{14}}{15120 x^5}, \\
		\Theta(-6,-6,-6,x)_{L_2} &= \frac{-349222 + 83611 x^8 + 182000 x^{10} + 83611 x^{12}- 349222x^{20}}{158558400 x^{7}}.	\\
	\end{aligned}
	\hspace{0.6 cm}
	\right\}
\end{equation}
We can see that $\Theta(0,0,0,1)_{L_2}=5/12$, which has been shown before by Komori \cite[Equation (6.16)]{Komori}. One must also observe the similarity between the special values given in \eqref{svT3eg} and \eqref{svT4eg} at various points of indeterminacy, obtained due to different limiting approaches. 

For the depth $3$ multi-zeta function $\zeta_3(r_1,r_2,r_3)$, Akiyama, Egami and Tanigawa proved the special value in \cite[Remark 3]{AET} that
\begin{align*}
	\zeta_3(4,-3,-2) = -\frac{461}{2520}-\frac{\pi^2}{144}+\frac{\pi^4}{45360}+\frac{\zeta(3)}{420}.
\end{align*}
Using our methods, we can also find such \textit{mixed non-positive} special values for $\Theta(r,s,t,x)$.
	\section{Kronecker limit type formula for $\Theta(r,s,t,x)$ in the third variable} \label{Sec2}
We first prove the Proposition \ref{first}, and its corollary \ref{second}, before using them to prove the main results of this section, stated below in three Theorems \ref{1stpolar}, \ref{2ndpolar} and \ref{3rdpolar}. Theorems \ref{1stpolar} and \ref{2ndpolar} are subdivided into four cases each, whose casewise classification can be better understood by the flowcharts in Figure \ref{ExPic6} and \ref{ExPic7} respectively. Theorems \ref{1stpolar} and \ref{2ndpolar} pertains to singularities of the type $r+t=1-\ell$ for $\ell \in \mathbb{N} \cup \{0\}$ while Theorem \ref{3rdpolar} pertains to singularities of the type $r+s+t=2$.

	\begin{figure}[h!]
	\centering
	\begin{minipage}{.5\textwidth}
		\centering
		\begin{tikzpicture}[description/.style={fill=white,inner sep=2pt}]
			\useasboundingbox (2.3,3.5) rectangle (9.8,9.5);
			\scope[transform canvas={scale=0.8}]
			\node at (6,10.5) {$s \in \mathbb{C}$};
			\draw (5.4,10.2) rectangle (6.6,10.8);
			\node at (4,9) {$s \notin \mathbb{Z}$};
			\draw (3.4,8.7) rectangle (4.6,9.3);
			\node at (8,9) {$s \in \mathbb{Z}$};
			\draw (7.4,8.7) rectangle (8.6,9.3);
			\node at (6,7.5) {$s> r+\ell$};
			\draw (5.15,7.2) rectangle (6.85,7.8);
			\node at (10,7.5) {$s\leq r+\ell$};
			\draw (9.15,7.2) rectangle (10.85,7.8);
			\node at (5,6) {$s\ne \ell+1$};
			\draw (4.2,5.7) rectangle (5.8,6.3);
			\node at (7,6) {$s=\ell+1$};
			\draw (6.2,5.7) rectangle (7.8,6.3);
			\node at (9,6) {$s \in \mathbb{Z}\hspace{-0.07 cm} \setminus \hspace{-0.07 cm}\mathbb{N}$};
			\draw (8.2,5.7) rectangle (9.8,6.3);
			\node at (11,6) {$s\in \mathbb{N}$};
			\draw (10.2,5.7) rectangle (11.8,6.3);
			\draw (6,9.75) -- (6,10.2);
			\draw (6,9.75) -| (8,9.3);
			\draw (6,9.75) -| (4,9.3);
			\draw (8,8.7) -- (8,8.25);
			\draw (8,8.25) -| (10,7.8);
			\draw (8,8.25) -| (6,7.8);
			\draw (10,6.75) -- (10,7.2);
			\draw (10,6.75) -| (11,6.3);
			\draw (10,6.75) -| (9,6.3);
			\draw (6,6.75) -- (6,7.2);
			\draw (6,6.75) -| (7,6.3);
			\draw (6,6.75) -| (5,6.3);
			\node at (4,8.4) {Case I};
			\node at (7,5.4) {Case II};
			\node at (5,5.4) {Case I};
			\node at (9,5.4) {Case III};
			\node at (11,5.4) {Case IV};
			\endscope
		\end{tikzpicture}
		\caption{The cases for $r \in \mathbb{Z} \setminus \mathbb{N}$.}
		\label{ExPic6}
	\end{minipage}%
	\begin{minipage}{.5\textwidth}
		\centering
		\begin{tikzpicture}[description/.style={fill=white,inner sep=2pt}]
			\useasboundingbox (2.3,3) rectangle (10.8,9);
			\scope[transform canvas={scale=0.8}]
			\node at (6,10.5) {$s \in \mathbb{C}$};
			\draw (5.4,10.2) rectangle (6.6,10.8);
			\node at (4,9) {$s \notin \mathbb{Z}$};
			\draw (3.4,8.7) rectangle (4.6,9.3);
			\node at (8,9) {$s \in \mathbb{Z}$};
			\draw (7.4,8.7) rectangle (8.6,9.3);
			\node at (6,7.5) {$s> r+\ell$};
			\draw (5.15,7.2) rectangle (6.85,7.8);
			\node at (10,7.5) {$s\leq r+\ell$};
			\draw (9.15,7.2) rectangle (10.85,7.8);
			\node at (8.5,6) {$s \in \mathbb{Z}\hspace{-0.07 cm} \setminus \hspace{-0.07 cm}\mathbb{N}$};
			\draw (7.7,5.7) rectangle (9.3,6.3);
			\node at (11.5,6) {$s\in \mathbb{N}$};
			\draw (10.7,5.7) rectangle (12.3,6.3);
			\node at (10.5,4.5) {$s\ne \ell+1$};
			\draw (9.7,4.2) rectangle (11.3,4.8);
			\node at (12.5,4.5) {$s=\ell+1$};
			\draw (11.7,4.2) rectangle (13.3,4.8);
			\draw (6,9.75) -- (6,10.2);
			\draw (6,9.75) -| (8,9.3);
			\draw (6,9.75) -| (4,9.3);
			\draw (8,8.7) -- (8,8.25);
			\draw (8,8.25) -| (10,7.8);
			\draw (8,8.25) -| (6,7.8);
			\draw (10,6.75) -- (10,7.2);
			\draw (10,6.75) -| (11.5,6.3);
			\draw (10,6.75) -| (8.5,6.3);
			\draw (11.5,5.7) -- (11.5,5.25);
			\draw (11.5,5.25) -| (12.5,4.8);
			\draw (11.5,5.25) -| (10.5,4.8);
			\node at (4,8.4) {Case I};
			\node at (6,6.9) {Case I};
			\node at (8.5,5.4) {Case II};
			\node at (10.5,3.9) {Case III};
			\node at (12.5,3.9) {Case IV};
			\endscope
		\end{tikzpicture}
		\caption{The cases for $r \in \mathbb{N}$.}
		\label{ExPic7}
	\end{minipage}
\end{figure}

	\begin{proposition}\label{first}
		Let $r,s,t \in \mathbb{C}$ such that $r \not\in \mathbb{N}$, and \\ $\Re(t)> \textup{max} \, \{0,1- \Re(r), 1- \Re(s), 2-\Re(r)-\Re(s)\}$. For any $M \in \mathbb{N} \cup \{0\} $ and $x>0$, we have
		\begin{align}\label{PE}
			&\frac{1}{\Gamma(t)} \int_{0}^{\infty} y^{t-1} \textup{Li}_s (e^{-xy}) \Bigg( \textup{ Li}_r (e^{-y}) - \Gamma(1-r) y^{r-1} - \sum_{\substack{k=0 }}^{M} (-1)^k\zeta(r-k) \frac{y^k}{k!}\Bigg) \, dy \notag \\
			&=  \Theta(r,s,t,x) - \frac{\Gamma(1-r)\Gamma(t+r-1)}{ x^{t+r-1}\Gamma(t)}\zeta(t+s+r-1)    - \sum_{\substack{k=0 }}^{M} \frac{ (-1)^k}{k!} \frac{\zeta(r-k)\Gamma(t+k)}{ x^{t+k}\Gamma(t)} \zeta(t+k+s).
		\end{align} 
	\end{proposition}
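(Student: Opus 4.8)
The plan is to write $\Theta(r,s,t,x)$ as a single Mellin-type integral and then subtract off the small-$y$ asymptotics of the polylogarithm $\textup{Li}_r(e^{-y})$, exactly matching the correction terms on the right of \eqref{PE}. First I would insert the Gamma integral $A^{-t}=\frac{1}{\Gamma(t)}\int_0^\infty y^{t-1}e^{-Ay}\,dy$ with $A=n+mx>0$ into the defining double series. Since $\Re(t)>0$ and $(r,s,t)\in\mathscr{D}$, the double sum together with the integral converges absolutely, so I may interchange summation and integration and recognize the inner sums as polylogarithms, $\sum_n e^{-ny}/n^r=\textup{Li}_r(e^{-y})$ and $\sum_m e^{-mxy}/m^s=\textup{Li}_s(e^{-xy})$, to obtain
\begin{equation*}
\Theta(r,s,t,x)=\frac{1}{\Gamma(t)}\int_{0}^{\infty} y^{t-1}\,\textup{Li}_r(e^{-y})\,\textup{Li}_s(e^{-xy})\,dy.
\end{equation*}

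Next I would invoke the classical expansion of the polylogarithm near $y=0$, valid for $r\notin\mathbb{N}$ and $0<y<2\pi$,
\begin{equation*}
\textup{Li}_r(e^{-y})=\Gamma(1-r)\,y^{r-1}+\sum_{k=0}^{\infty}\frac{(-1)^k\zeta(r-k)}{k!}\,y^{k},
\end{equation*}
which is precisely the reason the hypothesis $r\notin\mathbb{N}$ is imposed (for $r\in\mathbb{N}$ the $y^{r-1}$ term degenerates into a logarithmic term). Writing $\textup{Li}_r(e^{-y})$ as the bracketed remainder appearing in \eqref{PE}, plus $\Gamma(1-r)y^{r-1}$, plus the truncated sum $\sum_{k=0}^{M}$, I split the integral above into three pieces; the remainder piece is exactly the left-hand side of \eqref{PE}.

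It then remains to evaluate the two families of correction integrals. For $\Re(a)>0$ and $\Re(s+a)>1$, termwise integration of $\textup{Li}_s(e^{-xy})=\sum_m e^{-mxy}/m^s$ against $y^{a-1}$ gives the Mellin transform
\begin{equation*}
\int_{0}^{\infty} y^{a-1}\,\textup{Li}_s(e^{-xy})\,dy=\frac{\Gamma(a)}{x^{a}}\,\zeta(s+a).
\end{equation*}
Applying this with $a=t+r-1$ yields $\frac{\Gamma(1-r)\Gamma(t+r-1)}{x^{t+r-1}\Gamma(t)}\zeta(t+s+r-1)$, and with $a=t+k$ yields each term $\frac{(-1)^k}{k!}\frac{\zeta(r-k)\Gamma(t+k)}{x^{t+k}\Gamma(t)}\zeta(t+k+s)$. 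Transposing these across the equation produces \eqref{PE} verbatim.

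The main obstacle is the bookkeeping of convergence, ensuring that each of the three split integrals is individually finite and that the Mellin formula is legitimately applicable; this is exactly where the hypothesis $\Re(t)>\max\{0,1-\Re(r),1-\Re(s),2-\Re(r)-\Re(s)\}$ is used. The inequalities $\Re(t+r-1)>0$ and $\Re(t+s+r-1)>1$ validate the $a=t+r-1$ integral, while $\Re(t)>0$ and $\Re(s+t)>1$ validate the $a=t+k$ integrals in their worst case $k=0$; the same inequalities guarantee the endpoint convergence of the integral representation of $\Theta$ itself (at $y\to0$ one balances the $y^{r-1}$ and, when $\Re(s)<1$, $y^{s-1}$ singularities against $y^{t-1}$, and at $y\to\infty$ the factor $\textup{Li}_s(e^{-xy})$ decays exponentially for $x>0$). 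I would finally remark that the truncation level $M$ only forces the remainder to be $O(y^{M+1})$ near $0$, which is harmless in the present region but is precisely the mechanism that will later permit meromorphic continuation of the identity in $t$.
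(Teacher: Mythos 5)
Your proposal is correct and follows essentially the same route as the paper: both insert the Gamma integral, expand $\textup{Li}_r(e^{-y})$ via the classical small-$y$ series for $r\notin\mathbb{N}$, subtract the singular and truncated Taylor terms, and evaluate the correction integrals as $\Gamma(a)x^{-a}\zeta(s+a)$ under the stated constraints on $\Re(t)$. The only (cosmetic) difference is that the paper performs the subtraction at the level of the single sum over $n$ before introducing the $m$-summation, whereas you assemble the full double integral first and then split it into three convergent pieces.
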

	
	\begin{proof}
		For $\Re(t),x,n>0$, the definition of the Gamma function $\Gamma(t)$ upon a change of variable gives,
		\begin{align*}
			\int_{0}^{\infty} y^{t-1} e^{-(x+n)y} \, dy = (x+n)^{-t} \Gamma(t).
		\end{align*} 
		Since $\textup{Re}(r+t)>1$, divide both sides of the equation by $n^r$ and sum over $n \in \mathbb{N}$ and interchange the order of summation and integration to get,
		\begin{align}
			\int_{0}^{\infty} y^{t-1} e^{-xy} \textup{ Li}_r (e^{-y})  \, dy &= \Gamma(t) \sum_{n=1}^{\infty} \frac{1}{n^r (x+n)^{t}}, \label{SI}
		\end{align} 
		where $\textup{Li}_r(z):=\sum_{u=1}^{\infty} \frac{z^u}{u^r}$ is the polylogarithm function.  For $r \not\in \mathbb{N}$, around $y=0$, as stated in \cite[Equation 8, p.29]{Erdelyi}, we have,
		\begin{align}\label{LiE}
			\textup{ Li}_r (e^{-y}) = \Gamma(1-r) y^{r-1}+ \sum_{\substack{k=0 }}^\infty (-1)^k\zeta(r-k) \frac{y^k}{k!} = O(y^{\textup{max}(0,\textup{Re}(r)-1)}). 
		\end{align}
		The bound in \eqref{LiE} along with $\Re(t)> \textup{max} \, \{0, 1-\Re(r)\}$ ensures the absolute convergence, which in turn allows the interchange using  \cite[Theorem 2.1]{temme}. 
		Using \eqref{SI} and the definition of $\Gamma(t)$ twice, we see that,
		\begin{align}\label{after eva notnat}
			&\int_{0}^{\infty} y^{t-1} e^{-xy} \Bigg( \textup{ Li}_r (e^{-y})- \Gamma(1-r) y^{r-1} - \sum_{\substack{k=0 }}^{M} (-1)^k\zeta(r-k) \frac{y^k}{k!}\Bigg) \, dy \notag \\
			&= \Gamma(t) \sum_{n=1}^{\infty} \frac{1}{n^r (x+n)^{t}} - \frac{\Gamma(1-r)\Gamma(t+r-1)}{x^{t+r-1}}  - \sum_{\substack{k=0 }}^{M} \frac{ (-1)^k}{k!} \frac{\zeta(r-k)\Gamma(t+k)}{x^{t+k}}.
		\end{align}
		Replace $x$ by $mx$ in \eqref{after eva notnat}, divide both the sides of resulting equation by $m^s \Gamma(t)$, sum over $m \in \mathbb{N}$ and then interchange the order of summation and integration to get,
		\begin{align}\label{final not nat}
			&\frac{1}{\Gamma(t)} \int_{0}^{\infty} y^{t-1} \textup{Li}_s (e^{-xy}) \Bigg( \textup{ Li}_r (e^{-y}) - \Gamma(1-r) y^{r-1} - \sum_{\substack{k=0 }}^{M} (-1)^k\zeta(r-k) \frac{y^k}{k!}\Bigg) \, dy \notag \\
			&= \Theta(r,s,t, x)  - \frac{\Gamma(1-r)\Gamma(t+r-1)}{ x^{t+r-1}\Gamma(t)}\zeta(t+s+r-1)    - \sum_{\substack{k=0 }}^{M} \frac{ (-1)^k}{k!} \frac{\zeta(r-k)\Gamma(t+k)}{ x^{t+k}\Gamma(t)} \zeta(t+k+s).
		\end{align} 
		We justify the interchange of the order of summation and integration as follows: From \eqref{LiE}, we have,
		\begin{align}\label{BOr}
			\textup{ Li}_r (e^{-y}) - \Gamma(1-r) y^{r-1} - \sum_{\substack{k=0 }}^{M} (-1)^k\zeta(r-k) \frac{y^k}{k!} =  O(y^{M+1}), \hspace{0.5 cm} \textup{as}\quad y \to 0^+.
		\end{align}
		Similarly,
		\begin{equation}\label{BOs}
			\textup{ Li}_s (e^{-y})=
			\begin{cases}
				O(1),\hspace{5mm}&\text{if}\hspace{1mm}\Re(s) \ge 1,\\
				O(y^{\Re(s)-1}),&\text{if}\hspace{1mm}\Re(s) < 1,
			\end{cases}
			\hspace{0.5 cm}\textup{as} \quad y \to 0^+.
		\end{equation}
		Since $\Re(t)>\text{max}\, \{ 0, 1-\Re(s)\}$, from \eqref{BOr} and \eqref{BOs} we get,
		\begin{align}\label{BO0}
			y^{t-1} \textup{Li}_s (e^{-xy}) \bigg( \hspace{-0.2 cm} \textup{ Li}_r (e^{-y}) - \Gamma(1-r) y^{r-1} - \sum_{\substack{k=0 }}^{M} (-1)^k\zeta(r-k) \frac{y^k}{k!}\bigg) = O\Big(y^{M+\epsilon }\Big), 
		\end{align}
		for any $\epsilon >0$, as $y \to 0^+$. Also,  $\textup{ Li}_s (e^{-y})= O(y^{-K})$, for any $K>1$ as $ y \to \infty$. Therefore,  
		\begin{align}\label{BO1}
			y^{t-1} \textup{Li}_s (e^{-xy}) \bigg( \hspace{-0.15 cm} \textup{ Li}_r (e^{-y}) - \Gamma(1-r) y^{r-1} - \sum_{\substack{k=0 }}^{M} (-1)^k\zeta(r-k) \frac{y^k}{k!}\bigg) = O\Big(y^{\textup{max}\{\Re(t)+M-1-K,\, \Re(t)+r-2-K\} }\Big),
		\end{align}
		for any $K>1$, as $ y \to \infty$. Using \cite[Theorem 2.1]{temme} with the facts in \eqref{BO0} and \eqref{BO1}, we justify the interchange in \eqref{final not nat} to complete the proof.
	\end{proof}
	We now give the following result, analogous to Proposition \ref{first} for $r \in\mathbb{N}$ as a Corollary. 
	\begin{corollary} \label{second}
		Let $r \in \mathbb{N}$ and $s,t \in \mathbb{C}$ such that $\Re(t)> \textup{max} \, \{0,1- \Re(r), 1- \Re(s), 2-\Re(r)-\Re(s)\}$. For any $M \in \mathbb{N} \ \cup \{0\} $ with $M \ge r-1$, and $x>0$ we have,
		\begin{align}
			&\frac{1}{\Gamma(t)} \int_{0}^{\infty} y^{t-1} \textup{Li}_s (e^{-xy}) \Bigg( \textup{ Li}_r (e^{-y}) - (-1)^{r-1} (H_{r-1} -\log(y)) \frac{y^{r-1}}{(r-1)!} - \sum_{\substack{k=0 \\ k\ne r-1}}^{M} (-1)^k\zeta(r-k) \frac{y^k}{k!} \Bigg) \, dy \notag \\
			&=  \Theta(r,s,t,x)  - \sum_{\substack{k=0 \\ k\ne r-1}}^{M} \frac{ (-1)^k}{k!} \frac{\zeta(r-k)\Gamma(t+k)}{ x^{t+k}\Gamma(t)} \zeta(t+k+s) \notag  \\
			& \quad +  \frac{(-1)^{r}\Gamma(t+r-1)}{x^{t+r-1} (r-1)!\Gamma(t)}\Big(\zeta(t+s+r-1) \left( H_{r-1}  - \psi(t+r-1) + \log(x) \right)- \zeta'(t+s+r-1)\Big). \label{CE}
		\end{align} 
	\end{corollary}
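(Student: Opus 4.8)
The plan is to mirror the proof of Proposition \ref{first} almost verbatim, replacing only the asymptotic expansion \eqref{LiE} by its logarithmic counterpart, which is forced upon us when $r$ is a positive integer. Indeed, the identity \eqref{SI} was derived solely from $\Re(r+t)>1$ and makes no use of $r\notin\mathbb{N}$, so it remains the starting point. The one genuinely new input is that for $r\in\mathbb{N}$ the two pieces $\Gamma(1-r)y^{r-1}$ and the $k=r-1$ summand of \eqref{LiE} both diverge and must be coalesced. Taking the appropriate limit produces the expansion
\begin{align*}
\textup{Li}_r(e^{-y}) = (-1)^{r-1}\frac{y^{r-1}}{(r-1)!}\bigl(H_{r-1}-\log y\bigr) + \sum_{\substack{k=0\\k\neq r-1}}^{\infty} (-1)^k\zeta(r-k)\frac{y^k}{k!}, \qquad \text{as } y\to 0^+,
\end{align*}
which one checks directly for $r=1$ against $\textup{Li}_1(e^{-y})=-\log(1-e^{-y})$. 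This explains why the sum in \eqref{CE} omits $k=r-1$, and why we must require $M\ge r-1$, so that the $y^{r-1}$ term genuinely lies within the subtracted range.

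Next I would subtract this singular part from $\textup{Li}_r(e^{-y})$ inside the $e^{-xy}$-weighted integral and evaluate the resulting elementary Mellin transforms. All the purely monomial pieces are handled exactly as in \eqref{after eva notnat}; the only new integral is the one carrying the logarithm,
\begin{align*}
\int_0^\infty y^{t+r-2}\bigl(H_{r-1}-\log y\bigr)e^{-xy}\,dy = \frac{\Gamma(t+r-1)}{x^{t+r-1}}\bigl(H_{r-1}-\psi(t+r-1)+\log x\bigr),
\end{align*}
which follows by differentiating $\int_0^\infty y^{\alpha-1}e^{-xy}\,dy=\Gamma(\alpha)x^{-\alpha}$ in $\alpha$ at $\alpha=t+r-1$, since the factor $-\log y$ pulls down $\tfrac{d}{d\alpha}\bigl[\Gamma(\alpha)x^{-\alpha}\bigr]=\Gamma(\alpha)x^{-\alpha}\bigl(\psi(\alpha)-\log x\bigr)$. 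This is the analog of the $\Gamma(1-r)\Gamma(t+r-1)/x^{t+r-1}$ term of Proposition \ref{first}, now decorated with $\psi(t+r-1)$ and $\log x$.

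The decisive step is the passage to $\Theta$. As in Proposition \ref{first}, I would replace $x$ by $mx$, divide by $m^s\Gamma(t)$, and sum over $m\ge 1$. The crucial observation is that under $x\mapsto mx$ the logarithm splits as $\log(mx)=\log m+\log x$; the $\log x$, $H_{r-1}$ and $\psi(t+r-1)$ pieces pair with $\sum_m m^{-(s+t+r-1)}=\zeta(s+t+r-1)$, while the stray $\log m$ pairs with $\sum_m m^{-(s+t+r-1)}\log m=-\zeta'(s+t+r-1)$. This is exactly what manufactures the $\zeta'(t+s+r-1)$ in \eqref{CE}. Since the entire logarithmic contribution is \emph{subtracted} from $\textup{Li}_r$ and hence crosses to the right-hand side with the opposite sign, the factor $(-1)^{r-1}$ becomes the $(-1)^r$ recorded in \eqref{CE}. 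Collecting the remaining monomial terms (identical to those in Proposition \ref{first}, but with $k=r-1$ excised) completes the identity.

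Finally, the interchange of summation and integration is justified by the bounds already established for Proposition \ref{first}. Because $M\ge r-1$, subtracting through $k=M$ annihilates the whole singular part, including the logarithmic term, so the integrand is $O(y^{M+\epsilon})$ as $y\to 0^+$ for every $\epsilon>0$, the factor $\log y$ being absorbed into the $\epsilon$, exactly as in \eqref{BO0}; the behavior as $y\to\infty$ in \eqref{BO1} is unchanged. I expect the only delicate point to be the coalescence of the two singularities producing the correct $H_{r-1}-\log y$ term, together with the careful bookkeeping of signs and the tracking of $\log m$ that yields $\zeta'$; once those are handled, the argument is a routine transcription of the proof of Proposition \ref{first}.
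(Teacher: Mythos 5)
Your proposal is correct, but it takes a genuinely different route from the paper. The paper obtains Corollary \ref{second} by taking the limit $r\to a$ ($a\in\mathbb{N}$) directly in the already-established identity \eqref{PE}: the two divergent pieces $\Gamma(1-r)y^{r-1}$ and the $k=r-1$ summand coalesce on the left (yielding the $(H_{r-1}-\log y)y^{r-1}/(r-1)!$ term via the Laurent expansions of $\Gamma(1-r)$ and $\zeta(r-a+1)$ around $r=a$), while on the right the combination $\frac{\Gamma(1-r)\Gamma(t+r-1)}{x^{t+r-1}\Gamma(t)}\zeta(s+t+r-1)$ minus the $k=r-1$ term is evaluated as a limit in $r$, with the $\psi(t+r-1)$, $\log x$ and $\zeta'(t+s+r-1)$ terms all emerging from Taylor expansions \emph{in the variable $r$}; the interchange of limit and integral is justified by analyticity of the integral in $r$. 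You instead rerun the entire Mellin-transform derivation of Proposition \ref{first} with the integer-$r$ logarithmic expansion of $\textup{Li}_r(e^{-y})$ as input, compute the new elementary integral $\int_0^\infty y^{t+r-2}(H_{r-1}-\log y)e^{-xy}\,dy$ by differentiating $\Gamma(\alpha)x^{-\alpha}$ in $\alpha$, and then produce $\zeta'(t+s+r-1)$ from $\sum_m m^{-(s+t+r-1)}\log m$ after the substitution $x\mapsto mx$. Your sign bookkeeping checks out and reproduces \eqref{CE} exactly. What each approach buys: the paper's limit argument is shorter because it recycles \eqref{PE} wholesale and concentrates all the work into one Laurent expansion, whereas your direct computation is more transparent about the provenance of each term ($\psi$ from the log-Mellin transform, $\log x$ versus $\zeta'$ from splitting $\log(mx)$) at the cost of re-justifying the two interchanges of summation and integration. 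One small inaccuracy: after subtracting the singular part through $k=M\ge r-1$ there is no residual $\log y$ to ``absorb into $\epsilon$'' --- for integer $r$ the only logarithmic term in the small-$y$ expansion sits at $y^{r-1}$, so the remainder is genuinely $O(y^{M+1})$; this only makes your convergence claim easier, not harder.
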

	
	\begin{proof}
		Let $a\in \mathbb{N}$. Using the Laurent series expansions of $\Gamma(1-r)$ and $\zeta(r-a+1)$ around $r=a$ and simplifying, we get,
		\begin{align}
			&\lim_{r \to a} \Bigg( \textup{ Li}_r (e^{-y}) - \Gamma(1-r) y^{r-1} - \sum_{\substack{k=0 }}^{M} (-1)^k\zeta(r-k) \frac{y^k}{k!}\Bigg) \notag \\ 
			&= \textup{ Li}_a (e^{-y}) + (-1)^{a} (H_{a-1} -\log(y)) \frac{y^{a-1}}{(a-1)!} - \sum_{\substack{k=0 \\ k\ne a-1}}^{M} (-1)^k\zeta(a-k) \frac{y^k}{k!} \label{rLim}
		\end{align}
		Take limit as $r \to a$ on both sides of \eqref{PE}. Invoke \cite[Theorem 2.2]{temme} with the bounds given in \eqref{BO0} and \eqref{BO1} to see the integral on the left-hand side is analytic in $r$, in the strip $a-1< \Re(r)<a+1$ with a removable singularity at $a$. Hence, interchanging the order of limit and integration, we get,
		\begin{align*}
			&\frac{1}{\Gamma(t)} \int_{0}^{\infty} y^{t-1} \textup{Li}_s (e^{-xy}) \bigg( \textup{Li}_a (e^{-y}) + (-1)^{a} (H_{a-1} -\log(y)) \frac{y^{a-1}}{(a-1)!} - \sum_{\substack{k=0 \\ k\ne a-1}}^{M} (-1)^k\zeta(a-k) \frac{y^k}{k!} \bigg) \, dy \notag \\
			&= \Theta(a,s,t,x) - \sum_{\substack{k=0  \\ k \ne a-1}}^{M} \frac{ (-1)^k}{k!} \frac{\zeta(a-k)\Gamma(t+k)}{ x^{t+k}\Gamma(t)} \zeta(s+t+k)   \notag\\
			& - \lim_{r \to a} \bigg( \frac{\Gamma(1-r)\Gamma(t+r-1)}{ x^{t+r-1}\Gamma(t)}\zeta(s+t+r-1)- \frac{(-1)^{a}\zeta(r-a+1)\Gamma(t+a-1)}{ x^{t+a-1}(a-1)!\Gamma(t)} \zeta(s+t+a-1) \bigg)  .
		\end{align*}
		To evaluate the limit in the above equation, use the Laurent series expansions of the respective functions around $r=a$. Finally, replace $a$ by $r$ in the resultant to get \eqref{CE}.
	\end{proof}
	 
	We are now equipped to prove the Kronecker Limit type formula in \emph{third variable}, around $t=1-r-\ell$. 
	
	\begin{theorem}\label{1stpolar} \rm
		\textit{Let $r \in \mathbb{Z}\setminus \mathbb{N}$ and $\ell \in \mathbb{N} \cup \{0\}$ be arbitrary such that $\ell\geq 1-r$. Then, the following hold:}
\begin{enumerate}
	\item 
		 \textit{For a fixed $s \in \mathbb{Z}$ such that $s>r+\ell$ and $s\ne \ell+1$, or $s \in \mathbb{C} \setminus \mathbb{Z}$, we have, around $t=1-r-\ell$,}
		\begin{align}\label{T1C1}
			\Theta(r,s,t,x) &= (-1)^{r-1}\frac{ (r+\ell-1)!}{ \ell!}x^\ell (-r)! \zeta(s-\ell) \notag \\
			& \quad + \sum_{\substack{k=0}}^{r+\ell-1} \frac{(r+\ell-1)!\zeta(r-k)}{ x^{1-r-\ell+k}k!(r+\ell-k-1)!} \zeta(-r+s-\ell+k+1) + O(|t-(1-r-\ell)|) 
		\end{align}
	\item	\textit{ For $s=\ell+1$, we have,  around $t=1-r-\ell$,}
		\begin{align}
			\Theta(r,s,t,x)
			&= (-1)^{r-1}(-r)! x^\ell  \frac{ (r+\ell-1)!}{\ell! (t-(1-r-\ell))} +(-1)^{r-1} (-r)! x^\ell \frac{ (r+\ell-1)!}{\ell! }\left(\gamma-\log(x)+H_{\ell}-H_{r+\ell-1}\right) \notag \\
			& \quad + \sum_{\substack{k=0}}^{r+\ell-1} \binom{r+\ell-1}{k} \frac{\zeta(r-k)}{ x^{k-r-\ell+1}} \zeta(k-r+2) + O(|t-(1-r-\ell)|). \label{T1C2}
		\end{align}
	\item 	\textit{ For a fixed $s \in \mathbb{Z} \setminus \mathbb{N}$ such that $s \leq r+\ell$, we have,  around $t=1-r-\ell$,}
		\begin{align}
			\Theta(r,s,t,x)
			&= (-1)^{r-1}x^{\ell}\frac{(-r)!(r+\ell-1)!}{ \ell!}\zeta(s-\ell)   + \sum_{\substack{k=0}}^{r+\ell-1} \frac{\zeta(r-k)(r+\ell-1)!}{ x^{k-r-\ell+1}k!(r+\ell-k-1)!} \zeta(s+1-r-\ell+k) \notag\\
			&\quad +(-1)^{s-1}x^{s-1}\frac{(r+\ell-1)!(-s)!}{ (r+\ell-s)!} \zeta(s-\ell)+ O(|t-(1-r-\ell)|). \label{T1C3}
		\end{align}
 \item 	 \textit{For a fixed $s \in \mathbb{N}$ such that $s \leq r+\ell$, we have,  around $t=1-r-\ell$,}
		\begin{align}
			&\Theta(r,s,t,x) =   x^{s-1} \frac{(r+\ell-1)!}{ (r+\ell-s)! (s-1)!}   \frac{\zeta(s-\ell)}{(t-(1-r-\ell))} \notag \\
			&\quad + x^{s-1} \frac{(r+\ell-1)!}{ (r+\ell-s)! (s-1)!} (\gamma -H_{r+\ell-1}+H_{s-1}-\log(x)) \zeta(s-\ell) -(-1)^{r}x^{\ell}\frac{(-r)!(r+\ell-1)!}{ \ell!}\zeta(s-\ell)  \notag \\
			&\quad + \sum_{\substack{k=0 \\ k \ne r+\ell-s}}^{r+\ell-1} \frac{(r+\ell-1)!\zeta(r-k)}{(r+\ell-k-1)! x^{k-r-\ell+1}k!} \zeta(s-r-\ell+k+1) + O(|t-(1-r-\ell)|) \label{T1C4}
		\end{align}
	\end{enumerate}
	\end{theorem}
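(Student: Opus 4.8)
The plan is to read off the Laurent expansion of $\Theta(r,s,t,x)$ at $t=1-r-\ell$ directly from Proposition \ref{first}. Since $r\in\mathbb{Z}\setminus\mathbb{N}$ we have $r\notin\mathbb{N}$, so \eqref{PE} applies. First I would fix $M$ large enough (depending on $r,\ell,s$) that two things hold: the integral on the left-hand side of \eqref{PE} is holomorphic in a neighbourhood of $t=1-r-\ell$ -- which, by the bounds \eqref{BO0} and \eqref{BO1}, needs $M$ roughly exceeding $r+\ell-1$ -- and every index $k$ at which a $\zeta$-factor can blow up (namely $k=r+\ell-s$, when this is $\geq 0$) lies in the range $0\le k\le M$. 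With such an $M$, rearranging \eqref{PE} writes $\Theta(r,s,t,x)$ as a holomorphic function plus the two explicit families of terms
\begin{align*}
A(t):=\frac{\Gamma(1-r)\Gamma(t+r-1)}{x^{t+r-1}\Gamma(t)}\zeta(t+s+r-1),\qquad B_k(t):=\frac{(-1)^k}{k!}\frac{\zeta(r-k)\Gamma(t+k)}{x^{t+k}\Gamma(t)}\zeta(t+k+s),
\end{align*}
so the whole problem reduces to expanding $A$ and each $B_k$ about $t=1-r-\ell$.

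Setting $w=t-(1-r-\ell)$, the expansions are governed by three elementary facts: $\Gamma(-n+w)=\tfrac{(-1)^n}{n!}\left(w^{-1}+\psi(n+1)+O(w)\right)$ and $\tfrac{1}{\Gamma(-n+w)}=(-1)^n n!\,w\left(1-\psi(n+1)w+O(w^2)\right)$ for $n\in\mathbb{N}\cup\{0\}$, together with $\zeta(1+w)=w^{-1}+\gamma+O(w)$ and $x^{-w}=1-w\log x+O(w^2)$, where $\psi(n+1)=-\gamma+H_n$. The crucial observation is that $1/\Gamma(t)=1/\Gamma(-(r+\ell-1)+w)$ always contributes a simple zero, since $r+\ell\ge1$ forces $1-r-\ell$ to be a non-positive integer. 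In $A$ the argument $t+r-1$ equals $-\ell+w$, so $\Gamma(t+r-1)$ has a simple pole whose $w^{-1}$ exactly cancels that zero; thus $A$ is singular only when $\zeta(t+s+r-1)=\zeta(s-\ell+w)$ is itself singular, i.e.\ precisely when $s=\ell+1$. Likewise $B_k$ carries the pole $\zeta(1+w)$ only at $k=r+\ell-s$, and there the outcome depends on whether $\Gamma(t+k)=\Gamma(1-s+w)$ is finite ($s\le0$) or has its own pole ($s\ge1$).

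This dichotomy produces exactly the four cases. In \eqref{T1C1} neither $A$ nor any $B_k$ is singular (for $s\notin\mathbb{Z}$ no integer $k$ equals $r+\ell-s$ and $s\ne\ell+1$ automatically; for $s>r+\ell$ the index $r+\ell-s$ is negative), so $\Theta$ is holomorphic and its value is the sum of the constant terms -- each obtained simply as residue$\times$(leading zero coefficient)$\times$($\zeta$-value), with no $\gamma$ or $\log x$ appearing. In \eqref{T1C3}, where $s\le0$, the index $k=r+\ell-s\ge r+\ell$ lies beyond $r+\ell-1$, where $B_k$ would normally be $O(w)$; the $\zeta(1+w)$-pole cancels the $1/\Gamma(t)$-zero, reviving one extra finite term (the last summand of \eqref{T1C3}) but no pole. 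In \eqref{T1C2} and \eqref{T1C4} a genuine simple pole survives: in \eqref{T1C2} from $A$ (the product $\Gamma$-pole $\times$ $1/\Gamma$-zero $\times$ $\zeta$-pole has net order $-1$), and in \eqref{T1C4} from the single summand $B_{r+\ell-s}$, where now $\Gamma(1-s+w)$ contributes its own pole so that two poles and one zero again net a simple pole. For these two cases the constant term requires the sub-leading coefficients of each Laurent factor, which is where $\gamma$, $\log x$ and the harmonic numbers $H_\ell,H_{s-1},H_{r+\ell-1}$ enter, via $\gamma+\psi(\cdot)-\psi(r+\ell)$ collapsing to $\gamma+H_\bullet-H_{r+\ell-1}$. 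Collecting residues and constant terms and simplifying the sign $(-1)^{2(r+\ell-1)}=1$ and the factorials into the displayed coefficients yields \eqref{T1C1}--\eqref{T1C4}.

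The main obstacle is precisely the book-keeping at the single index $k=r+\ell-s$: one must correctly combine a possible $\Gamma$-pole, the ever-present $1/\Gamma$-zero, and the $\zeta$-pole to decide both the order of the surviving singularity and, when it is a simple pole, its constant term, for which the second-order data (the $\psi$- and $\log x$-corrections) is indispensable. A secondary point to verify is $M$-independence: every index $k\ge r+\ell$ with $k\ne r+\ell-s$ gives a term of order $O(w)$ (a $1/\Gamma$-zero with no compensating pole), so enlarging $M$ alters the right-hand side only by $O(|t-(1-r-\ell)|)$, and the stated formulas are unambiguous.
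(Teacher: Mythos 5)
Your proposal is correct and follows essentially the same route as the paper: both start from Proposition \ref{first} with $M$ chosen large enough that the integral term is holomorphic near $t=1-r-\ell$ (and hence killed by the simple zero of $1/\Gamma(t)$), and then perform the same case-by-case Laurent expansion of the explicit $\Gamma$- and $\zeta$-terms, tracking exactly the pole/zero bookkeeping at $k=r+\ell-s$ and at the factor $\zeta(t+s+r-1)$ that you describe. The only cosmetic difference is that the paper fixes the concrete value $M=r+\ell+\lfloor|\Re(s)|\rfloor+1$ rather than leaving $M$ implicit.
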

	\begin{proof}
		Let $r \in \mathbb{Z}\setminus \mathbb{N}$ and $\ell \in \mathbb{N} \cup \{0\}$ such that $\ell\geq 1-r$ be fixed and $s \in \mathbb{C}$.
		Then, for $t>-r-\ell$, \eqref{BOr} and \eqref{BOs} gives, 
		\begin{align}
			y^{t-1} \textup{Li}_s (e^{-xy}) \bigg( \hspace{-0.2 cm} \textup{ Li}_r (e^{-y}) - \Gamma(1-r) y^{r-1} - \sum_{\substack{k=0 }}^{r+\ell+ \lfloor|\textup{Re}(s)|\rfloor +1} (-1)^k\zeta(r-k) \frac{y^k}{k!}\bigg) = O\left(y^{\epsilon-1}\right), \label{BOt}
		\end{align} 
		for some $\e >0$. For $M=r+\ell+ \lfloor|\textup{Re}(s)|\rfloor +1$, from \eqref{BO1} and \eqref{BOt}, the integral on the left-hand side of \eqref{PE} is now defined and analytic in the extended region $\textup{Re}(t)>-r-\ell$  by using \cite[Theorem 2.2]{temme}. Hence, due to the pole of $\Gamma(t)$ at $t=1-r-\ell$, we see
		\begin{align*}
			\lim_{t \to 1-r-\ell} \Bigg( \frac{1}{\Gamma(t)} \int_{0}^{\infty} y^{t-1} \textup{Li}_s (e^{-xy}) \bigg( \hspace{-0.2 cm} \textup{ Li}_r (e^{-y}) - \Gamma(1-r) y^{r-1} - \sum_{\substack{k=0 }}^{r+\ell+ \lfloor|\textup{Re}(s)|\rfloor +1} (-1)^k\zeta(r-k) \frac{y^k}{k!}\bigg) \, dy \Bigg) =0.
		\end{align*} 
		Hence, from \eqref{PE}
		\begin{align}
			\Theta(r,s,t,x) &= \frac{\Gamma(1-r)\Gamma(t+r-1)}{ x^{t+r-1}\Gamma(t)}\zeta(r+s+t-1) \notag \\
			& \quad + \sum_{\substack{k=0}}^{r+\ell+ \lfloor|\textup{Re}(s)|\rfloor +1} \frac{(-1)^k\zeta(r-k)\Gamma(t+k)}{ x^{t+k}k!\Gamma(t)} \zeta(s+t+k) + O(|t-(1-r-\ell)|). \label{M1R}
		\end{align}
		To get the required result for $\Theta(r,s,t,x)$, we analyze the behaviour of the right-hand side of \eqref{M1R} around $t=1-r-\ell$. Since $\Gamma(t)$ has a pole at $t=1-r-\ell$, only the terms with a pole in the numerator survive as $t \to 1-r-\ell$. We handle the remaining calculations in cases. \\ 
		Case I:
Let $s \in \mathbb{Z}$ such that $s>r+\ell$ and $s\ne \ell+1$, or $s \in \mathbb{C} \setminus \mathbb{Z}$. Note that, $\Gamma(t+k)$ has a simple pole only for $k \leq r+\ell-1$. For any $k$, $\zeta(s+t+k)$ has no pole since $s>r+\ell$ or $s \notin \mathbb{Z}$.
		$\Gamma(t+r-1)$ has a simple pole.
		$\zeta(r+s+t-1)$ has no pole since $s \ne \ell+1$.
		%
		Use the functional equation $\Gamma(t+1)=t \Gamma(t)$ repeatedly to get,
		\begin{align*}
			\frac{\Gamma(t+k)}{\Gamma(t)}=t (t+1)\cdots(t+k-1). 
		\end{align*}
		Taking limit as $t \to 1-r-\ell$ on both sides of the above equation, for $0\leq k \leq r+\ell-1$ we get,
		\begin{align}
			\lim_{t\to 1-r-\ell} \left( \frac{\Gamma(t+k)}{\Gamma(t)} \right) = (1-r-\ell)(2-r-\ell)\cdots (k-r-\ell) = (-1)^{k} \frac{(r+\ell-1)!}{(r+\ell-k-1)!}. \label{Glim1}
		\end{align}
		Also, since $r \in \mathbb{Z} \setminus \mathbb{N}$, 
		\begin{align*}
			\frac{\Gamma(t+r-1)}{\Gamma(t)} = \frac{\Gamma(t+r-1)}{(t-1)(t-2) \cdots (t+r-1) \Gamma(t+r-1)}  = \frac{1}{(t-1)(t-2) \cdots (t+r-1)}
		\end{align*}
		Hence, we can see,
		\begin{align}
			\lim_{t \to 1-r-\ell} \left( \frac{\Gamma(t+r-1)}{\Gamma(t)}  \right) = (-1)^{r-1} \frac{(r+\ell-1)!}{\ell!}. \label{Glim2}
		\end{align}
		Use \eqref{Glim1} and \eqref{Glim2} in \eqref{M1R} to get the required result \eqref{T1C1}.\\
		Case II: Let $s=\ell+1$. For any $k$, $\zeta(s+t+k)$ has no pole since $r\le0$. Also, $\Gamma(t+r-1)$ has a simple pole, and
		$\zeta(r+s+t-1)$ has a simple pole since $s = \ell+1$. Also, observe that, for any $k, r \in \mathbb{N}$ such that $k \leq r-1$, around $t=1-r$, from Laurent series expansions of the $\Gamma(z)$, we have 
		\begin{align}
			\frac{\Gamma(t+r-1)}{\Gamma(t)}&= (-1)^{r-1}\frac{ (r+\ell-1)!}{\ell!}-(-1)^{r}\frac{ (r+\ell-1)!}{\ell!} (H_{\ell}-H_{r+\ell-1}) (t-(1-r-\ell)) \notag \\ & \qquad + O(|t-(1-r-\ell)|^2). \label{Glim3}
		\end{align}
		Also, the Laurent series of $x^{r+t-1}$ with additional term around $r+t=1-\ell$ is as follows,
		\begin{align}
			\frac{1}{x^{t+r-1}} =x^{\ell} - x^{\ell} \log(x) (t-(1-r-\ell)) + O(|t-(1-r-\ell)|^2). \label{xlim}
		\end{align}
		The well-known Laurent series expansion of $\zeta(z)$,
		\begin{align}
			\zeta(t+r+\ell)= \frac{1}{(t-(1-r-\ell))} + \gamma - \gamma_1 (t-(1-r-\ell)) +O(|t-(1-r-\ell)|^2). \label{zlim}
		\end{align}
		Combining the equations \eqref{Glim1}, \eqref{Glim3}, \eqref{xlim} and \eqref{zlim} in \eqref{M1R}, we get \eqref{T1C2}.\\ 
		Case III: Let $s \in \mathbb{Z} \setminus \mathbb{N}$ such that $s \leq r+\ell$. Observe that $\zeta(s+t+k)$ has a simple pole only for $k=r+\ell-s$, since $s\leq r+\ell$. Also note that $r+\ell-s> r+\ell-1$ since $s \in \mathbb{Z} \setminus  \mathbb{N}$. $\Gamma(t+r-1)$ has a simple pole. $\zeta(r+s+t-1)$ has no pole since $s \leq r+ \ell$. Hence, from \eqref{M1R}, we get \eqref{T1C3}.
		\\ 
	   Case IV: Let $s \in \mathbb{N}$ such that $s \leq r+\ell$. Note that $\zeta(s+t+k)$ has a simple pole only for $k=r+\ell-s$, since $s\leq r+\ell$. Also note that $r+\ell-s\leq r+\ell-1$ since $s \in \mathbb{N}$. $\Gamma(t+r-1)$ has a simple pole. $\zeta(r+s+t-1)$ has no pole since $s \leq r+ \ell$.
	Hence, from \eqref{M1R}, we get \eqref{T1C4},which completes the proof.
	\end{proof}
	
	\begin{corollary}\label{corollary0}
	Let $r_0,s_0,t_0$ be three negative integers such that all of them are odd or exactly one of them is odd, then $\Theta(r_0,s_0,t,x ) $ vanishes as $t \to t_0$.
	\end{corollary}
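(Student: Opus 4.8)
The plan is to recognize the limit $\Theta(r_0,s_0,t_0,x)_{L_2}$ as a special value read off from the Kronecker limit type formula \eqref{T1C3}, and then to force it to vanish using the trivial zeros of $\zeta$ together with a short parity count. First I would put $r=r_0$ and choose $\ell:=1-r_0-t_0$, so that $t_0=1-r_0-\ell$ and the limit $t\to t_0$ in \eqref{sense2} is exactly the limit $t\to 1-r_0-\ell$ governed by Theorem \ref{1stpolar}. Since $r_0,s_0,t_0$ are negative integers, one checks immediately that $r_0\in\mathbb{Z}\setminus\mathbb{N}$, that $\ell\in\mathbb{N}\cup\{0\}$ with $\ell\geq 1-r_0$ (because $t_0\leq 0$), and that $s_0\in\mathbb{Z}\setminus\mathbb{N}$ with $s_0\leq -1\leq 1-t_0=r_0+\ell$; hence we are precisely in Case III of Theorem \ref{1stpolar}, whose expansion \eqref{T1C3} carries no singular part. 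Consequently $\Theta(r_0,s_0,t_0,x)_{L_2}$ equals the constant term on the right-hand side of \eqref{T1C3}.

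Next I would rewrite the zeta-arguments in terms of $r_0,s_0,t_0$. Using $\ell=1-r_0-t_0$ one finds $s_0-\ell=r_0+s_0+t_0-1$ and $s_0+1-r_0-\ell+k=s_0+t_0+k$, while the summation range becomes $0\leq k\leq r_0+\ell-1=-t_0$. The hypothesis that all three integers are odd, or that exactly one is odd, is equivalent to $r_0+s_0+t_0$ being \emph{odd}. Thus $s_0-\ell=r_0+s_0+t_0-1$ is a negative even integer (it is even and $\leq -4$), so $\zeta(s_0-\ell)=0$; since both the first and the third terms of \eqref{T1C3} carry the factor $\zeta(s_0-\ell)$, the surviving factorials and powers of $x$ being finite and nonzero, these two terms vanish.

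It then remains to kill every summand of the middle sum, each of which carries the product $\zeta(r_0-k)\,\zeta(s_0+t_0+k)$. For $0\leq k\leq -t_0$ both arguments are $\leq -1$, so neither factor meets the pole of $\zeta$ at $1$, and a summand can survive only if both factors are nonzero. Now $\zeta(r_0-k)=0$ exactly when $k\equiv r_0 \pmod 2$, and $\zeta(s_0+t_0+k)=0$ exactly when $k\equiv s_0+t_0 \pmod 2$. Because $r_0+s_0+t_0$ is odd, $r_0$ and $s_0+t_0$ have opposite parities, so the two residue classes $\{k\equiv r_0\}$ and $\{k\equiv s_0+t_0\}$ together exhaust $\mathbb{Z}/2\mathbb{Z}$. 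Hence for \emph{every} $k$ in the range at least one factor vanishes, and the entire middle sum is $0$.

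Combining the three vanishing contributions gives $\Theta(r_0,s_0,t_0,x)_{L_2}=0$, which is the assertion. The one point that needs care, rather than any heavy computation, is to guarantee that the vanishing is genuine and not an indeterminate $0\cdot\infty$: this is why I would explicitly record that \eqref{T1C3} carries no $1/(t-t_0)$ term, so the limit really is the displayed constant, and that all zeta-arguments occurring there stay $\leq -1$, so none collides with the simple pole of $\zeta$ at $1$. With these two observations in place, the parity count closes the argument.
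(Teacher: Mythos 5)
Your proposal is correct and follows essentially the same route as the paper: both invoke \eqref{T1C3} (Case III of Theorem \ref{1stpolar}) with $\ell=1-r_0-t_0$ and then kill the constant term using the trivial zeros of $\zeta$ at negative even integers. Your write-up merely makes explicit the parity count for the middle sum and the absence of a singular part, details the paper leaves to the reader.
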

	\begin{proof}
	By taking $\ell=1-r_0-t_0$ in \eqref{T1C3}, around $t=t_0$, we have
	\begin{align*}
		\Theta(r_0,s_0,t,x)
		&= (-1)^{r-1}x^{1-r_0-t_0}\frac{(-r_0)!(-t_0)!}{ (1-r_0-t_0)!}\zeta(r_0+s_0+t_0-1)   + \sum_{\substack{k=0}}^{-t_0} \frac{(-t_0)! \zeta(r_0-k) \zeta(s_0+t_0+k) }{ x^{k+t_0}k!(-t_0-k)!} \notag\\
		&\quad +(-1)^{s_0-1}x^{s_0-1}\frac{(-t_0)!(-s_0)!}{ (1-t_0-s_0)!} \zeta(r_0+s_0+t_0-1)+ O(|t-t_0|). 
	\end{align*} 
	Due to the trivial zeroes of $\zeta(s)$ at negative even integers, from the choices of $r_0,s_0$ and $t_0$, it is clear that the constant term of the above expression equals zero. By tending $t \to t_0$, we see that the proof is complete.
	\end{proof}
	
	\begin{theorem}\label{2ndpolar} \rm 
		\textit{Let $r \in \mathbb{N}$ and $\ell \in \mathbb{N} \cup \{0\}$ be arbitrary such that $\ell \ge 1-r$. Then, the following hold:}
		\begin{enumerate}
			\item  \textit{For a fixed $s \in \mathbb{Z}$ such that $s>r+\ell$ , or $s \in \mathbb{C} \setminus \mathbb{Z}$, we have,}
			\begin{align}
			\Theta(r,s,t,x) 
			&= \binom{r+\ell-1}{\ell} x^{\ell} \frac{\zeta(s-\ell)}{(t-(1-r-\ell))} + \binom{r+\ell-1}{\ell} x^{\ell} \zeta(s-\ell) \left(\gamma+H_{r-1}-H_{r+\ell-1}\right) \notag  \\
			&\quad +\sum_{\substack{k=0 \\ k\ne r-1}}^{r+\ell-1} \binom{r+\ell-1}{k} x^{r+\ell-1-k} \zeta(r-k) \zeta(1+k-\ell-r+s) + O(|t-(1-r-\ell)|). \label{T2C1}
		\end{align}
		\\
		\item   \textit{For a fixed $s \in \mathbb{Z} \setminus \mathbb{N}$ such that $s \leq r+\ell$, we have,}
		\begin{align}
			\Theta(r,s,t,x) &=	 \binom{r+\ell-1}{\ell} x^{\ell} \frac{\zeta(s-\ell)}{(t-(1-r-\ell))} + \binom{r+\ell-1}{\ell} x^{\ell} \zeta(s-\ell) \left(\gamma+H_{r-1}-H_{r+\ell-1}\right) \notag  \\
			& \quad + \sum_{\substack{k=0 \\ k\ne r-1}}^{r+\ell-1} \binom{r+\ell-1}{k} x^{r+\ell-1-k} \zeta(r-k) \zeta(1+k-\ell-r+s) \notag  \\
			& \quad  + (-1)^{s-1} \frac{(r+\ell-1)!(-s)!}{(r+\ell-s)!} x^{s-1} \zeta(s-\ell) + O(|t-(1-r-\ell)|). \label{T2C2}
		\end{align}
		\\
		\item   \textit{For a fixed $s \in \mathbb{N}$ such that $s\leq r+\ell$ and $s \ne \ell+1$, we have,}
			\begin{align}
			\Theta(r,s,t,x) &= \binom{r+\ell-1}{\ell} x^{\ell} \frac{\zeta(s-\ell)}{(t-(1-r-\ell))} + \binom{r+\ell-1}{\ell} x^{\ell} \zeta(s-\ell) \left(\gamma+H_{r-1}-H_{r+\ell-1}\right) \notag  \\
			&\quad + \binom{r+\ell-1}{s-1}x^{s-1} \frac{\zeta(s-\ell)}{(t-(1-r-\ell))} + \binom{r+\ell-1}{s-1}x^{s-1} \zeta(s-\ell) \left( \gamma + H_{s-1} -H_{r+\ell-1} - \log(x) \right) \notag \\
			&\quad +\sum_{\substack{k=0 \\ k\ne r-1 \\ k\ne r+\ell-s}}^{r+\ell-1} \binom{r+\ell-1}{k} x^{r+\ell-1-k} \zeta(r-k) \zeta(1+k-\ell-r+s)  + O(|t-(1-r-\ell)|). \label{T2C3}
			\end{align}
		\\
		\item   \textit{For $s = \ell+1$, we have}
		\begin{align}
			\Theta(r,s,t,x) & =  \binom{r+\ell-1}{\ell}  \frac{ 2 x^{\ell}}{(t-(1-r-\ell))^2} + \binom{r+\ell-1}{\ell}  \frac{  x^{\ell} \left(2 \gamma+ H_{\ell}+H_{r-1}-2 H_{r+\ell-1}-\log(x)\right)}{(t-(1-r-\ell))} \notag \\
			&\quad + \frac{1}{3}\binom{r+\ell-1}{\ell} x^{\ell} \bigg( \hspace{-0.2 cm} -\pi^2+ 3 (\gamma+ H_{\ell}-H_{r+\ell-1}-\log(x)) (\gamma+H_{r-1}-H_{r+\ell-1})+3 \psi'(r+\ell)\bigg) \notag \\
			&\quad 	+ \sum_{\substack{k=0 \\ k\ne r-1}}^{r+\ell-1} \binom{r+\ell-1}{k} x^{r+\ell-1-k} \zeta(r-k) \zeta(k-r+2) + O(|t-(1-r-\ell)|). \label{T2C4}
		\end{align}		
	\end{enumerate}
	\end{theorem}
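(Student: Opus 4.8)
The plan is to run the argument of Theorem~\ref{1stpolar} almost verbatim, the only structural change being that for $r\in\mathbb N$ the correct starting identity is Corollary~\ref{second}, i.e.\ \eqref{CE}, rather than Proposition~\ref{first}. Fix $\ell$ with $\ell\ge 1-r$ and write $t_0:=1-r-\ell$ and $u:=t-t_0$; note $t_0+r-1=-\ell\le 0$, so $\Gamma(t)$ has a pole at $t_0$. Taking $M=r+\ell+\lfloor|\Re(s)|\rfloor+1$, the bounds \eqref{BO0}, \eqref{BO1} and \eqref{BOt} hold as before, so by \cite[Theorem~2.2]{temme} the integral on the left of \eqref{CE} is analytic for $\Re(t)>-r-\ell$. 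Hence, exactly as in the passage from \eqref{PE} to \eqref{M1R}, the simple zero of $1/\Gamma(t)$ kills that integral as $t\to t_0$, and \eqref{CE} reduces to $\Theta(r,s,t,x)$ being the sum $\sum_{k=0,\,k\ne r-1}^{M}\frac{(-1)^k}{k!}\frac{\zeta(r-k)\Gamma(t+k)}{x^{t+k}\Gamma(t)}\zeta(t+k+s)$ plus the term $-\frac{(-1)^{r}\Gamma(t+r-1)}{x^{t+r-1}(r-1)!\,\Gamma(t)}\bigl(\zeta(t+s+r-1)(H_{r-1}-\psi(t+r-1)+\log x)-\zeta'(t+s+r-1)\bigr)$, modulo $O(|u|)$.

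Next I would carry out the pole bookkeeping at $t_0$. For $r\in\mathbb N$ every quotient $\Gamma(t+k)/\Gamma(t)=\prod_{j=0}^{k-1}(t+j)$ is a polynomial in $t$, hence contributes no pole, and \eqref{Glim1} records its value at $t_0$ (in particular it vanishes for $k\ge r+\ell$). Thus in the sum a pole can arise only from $\zeta(t+k+s)$, which is singular at the single index $k=r+\ell-s$; this index lies in range and differs from $r-1$ precisely when $s\in\mathbb N$, $s\le r+\ell$ and $s\ne\ell+1$. In the remaining term $\Gamma(t+r-1)/\Gamma(t)=\prod_{j=0}^{r-2}(t+j)$ is likewise a polynomial, with value $(-1)^{r-1}(r+\ell-1)!/\ell!$ at $t_0$, while $\psi(t+r-1)=\psi(-\ell+u)$ always carries a simple pole and $\zeta(t+s+r-1),\zeta'(t+s+r-1)$ become singular only when $s=\ell+1$.

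This dictionary disposes of Cases I, II, III at once. In Case~I the index $r+\ell-s$ is out of range (or $s\notin\mathbb Z$), the sum is regular, and the lone simple pole comes from $-\psi(-\ell+u)$; its residue, read off via the value of $\prod_{j=0}^{r-2}(t+j)$ at $t_0$, \eqref{xlim} and the expansion of $\psi(-\ell+u)$, gives the coefficient $\binom{r+\ell-1}{\ell}x^{\ell}\zeta(s-\ell)$ in \eqref{T2C1}, and the $u^0$-coefficient supplies the rest. Case~III is identical except that the live index $k=r+\ell-s$ now survives, producing via \eqref{Glim1} the second simple pole $\binom{r+\ell-1}{s-1}x^{s-1}\zeta(s-\ell)/u$ of \eqref{T2C3}. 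Case~II is the degenerate variant: when $s\le 0$ one has $k=r+\ell-s\ge r+\ell$, so the polynomial $\Gamma(t+k)/\Gamma(t)$ has a zero at $t_0$ that cancels the pole of $\zeta(t+k+s)$, and the resulting $0\cdot\infty$ limit collapses to the single extra finite term $(-1)^{s-1}\frac{(r+\ell-1)!(-s)!}{(r+\ell-s)!}x^{s-1}\zeta(s-\ell)$ of \eqref{T2C2}.

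The real work is Case~IV, $s=\ell+1$. Here $r+\ell-s=r-1$ is exactly the excluded index, so the sum is regular and the entire singularity sits in the last term, where $\zeta(1+u)$ (simple pole), $\zeta'(1+u)$ (double pole) and $\psi(-\ell+u)$ (simple pole) collide to give a genuine double pole with leading coefficient $\binom{r+\ell-1}{\ell}\,2x^{\ell}$. The delicate step is purely computational: one expands the polynomial $\prod_{j=0}^{r-2}(t+j)$ and $x^{-(t+r-1)}=x^{\ell}x^{-u}$ through second order in $u$, together with $\zeta(1+u)=u^{-1}+\gamma-\gamma_1u+\cdots$, $\zeta'(1+u)=-u^{-2}-\gamma_1+\cdots$ and $\psi(-\ell+u)=-u^{-1}+(H_\ell-\gamma)+O(u)$ (its $O(u)$-term a trigamma value), then multiplies out and collects the coefficients of $u^{-2},u^{-1},u^0$. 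I expect the main obstacle to be precisely this bookkeeping: the second-order data is what produces the $\psi'(r+\ell)$ and $\pi^2$ in the constant term, several $\gamma_1$ and $\log^2x$ contributions must cancel, and the survivor must be reorganized into the form $-\pi^2+3(\gamma+H_\ell-H_{r+\ell-1}-\log x)(\gamma+H_{r-1}-H_{r+\ell-1})+3\psi'(r+\ell)$ displayed in \eqref{T2C4}. Once this is verified, \eqref{T2C4}, and with it all four cases of the theorem, follow.
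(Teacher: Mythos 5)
Your proposal is correct and follows essentially the same route as the paper: start from Corollary \ref{second} (equation \eqref{CE}) with $M=r+\ell+\lfloor|\Re(s)|\rfloor+1$, use the analyticity of the integral together with the pole of $\Gamma(t)$ at $t=1-r-\ell$ to discard the integral term, and then perform the same case-by-case pole bookkeeping (the polynomial $\Gamma(t+k)/\Gamma(t)$, the single singular index $k=r+\ell-s$, the simple pole of $\psi(t+r-1)$, and the second-order expansions \eqref{Glim4}--\eqref{xlim5} for the double pole when $s=\ell+1$). The only cosmetic discrepancy is that for $r\in\mathbb{N}$ the small-$y$ bound you need is the logarithm-corrected \eqref{BOt1} rather than \eqref{BOt}, but this is exactly what the paper uses and does not affect the argument.
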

	\begin{proof}
		Let $s \in \mathbb{C}$. Then, for $t>-r-\ell$, \eqref{rLim} and \eqref{BOt} gives,  
		\begin{align}
			\textup{ Li}_r (e^{-y}) - (-1)^{r-1} (H_{r-1} -\log(y)) \frac{y^{r-1}}{(r-1)!} - \sum_{\substack{k=0 \\ k\ne r-1}}^{ r+\ell+ \lfloor|\textup{Re}(s)|\rfloor +1} (-1)^k\zeta(r-k) \frac{y^k}{k!} = O(y^{\epsilon -1}) \label{BOt1}
		\end{align} 
		for some $\e >0$. As in the previous theorem, for $M=r+\ell+ \lfloor|\textup{Re}(s)|\rfloor +1$, from \eqref{BO1} and \eqref{BOt1}, the integral on the left-hand side of \eqref{CE} is now defined and analytic in the extended region $\textup{Re}(t)>-r-\ell$  by using \cite[Theorem 2.2]{temme}. Hence, due to the pole of $\Gamma(t)$ at $t=1-r-\ell$, we see
		\begin{align*}
			\lim_{t \to 1-r-\ell} \Bigg( \frac{1}{\Gamma(t)} \int_{0}^{\infty} y^{t-1} \textup{Li}_s (e^{-xy}) \bigg( \textup{ Li}_r(e^{-y}) -& (-1)^{r-1} (H_{r-1} -\log(y)) \frac{y^{r-1}}{(r-1)!} \\ &- \sum_{\substack{k=0 \\ k\ne r-1}}^{r+\ell+ \lfloor|\textup{Re}(s)|\rfloor +1} (-1)^k\zeta(r-k) \frac{y^k}{k!} \bigg) \, dy \Bigg) =0.
		\end{align*}
		Hence, from \eqref{CE},
		\begin{align}
			\Theta(r,s,t,x) & =  \frac{(-1)^{r-1}\Gamma(t+r-1)}{x^{t+r-1} (r-1)!\Gamma(t)}\Big(\zeta(t+s+r-1) \left( H_{r-1}  - \psi(t+r-1) + \log(x) \right)- \zeta'(t+s+r-1)\Big) \notag \\ 
			&\quad +\sum_{\substack{k=0 \\ k\ne r-1}}^{r+\ell+ \lfloor|\textup{Re}(s)|\rfloor +1} \frac{ (-1)^k}{k!} \frac{\zeta(r-k)\Gamma(t+k)}{ x^{t+k}\Gamma(t)} \zeta(t+k+s) + O(t-(1-r-\ell)). \label{M1N}
		\end{align}
		To get the required Kronecker limit formula for $\Theta(r,s,t,x)$, we analyze the behaviour of the right-hand side of \eqref{M1R} around $t=1-r-\ell$. Since $\Gamma(t)$ has a pole at $t=1-r-\ell$, only the terms with a pole in the numerator survive, as $t \to 1-r-\ell$. We handle remaining calculations in cases. \\ 
		Case I: Let $s \in \mathbb{Z}$ such that $s>r+\ell$ , or $s \in \mathbb{C} \setminus \mathbb{Z}$. Note that, $\Gamma(t+k)$ has a simple pole only for $k \leq r+\ell-1$. Also, $\zeta(s+t+k)$ and $\zeta(r+s+t-1)$ have no poles since $s > r+\ell$ or $s \notin \mathbb{Z}$.  $\Gamma(t+r-1)$ has a simple pole.  $\psi(t+r-1)$ has a simple pole. From \eqref{M1N}, using \eqref{Glim1} and \eqref{Glim3}, we get \eqref{T2C1}. \\ 
		Case II: Let $s \in \mathbb{Z} \setminus \mathbb{N}$ such that $s \leq r+\ell$. Observe that $\zeta(s+t+k)$ has a simple pole for $k =r+\ell-s$, moreover $r+\ell-s > r+\ell-1$. $\zeta(r+s+t-1)$ has no pole since $s \ne \ell+1$.  $\Gamma(t+r-1)$ has a simple pole. $\psi(t+r-1)$ has a simple pole. Also observe that the term corresponding to $k=r+\ell-s$ appears due to the pole of $\zeta(t+k+s)$ and not $\Gamma(t+k)$. From \eqref{M1N}, we get \eqref{T2C2}. \\  
		Case III: Let  $s \in \mathbb{N}$ such that $s\leq r+\ell$ and $s \ne \ell+1$. Then $\zeta(s+t+k)$ has a simple pole for $k =r+\ell-s$, moreover $r+\ell-s \le r+\ell-1$. $\zeta(r+s+t-1)$ has no pole since $s \ne \ell+1$.  $\Gamma(t+r-1)$ has a simple pole. $\psi(t+r-1)$ has a simple pole. As in the previous case, the term corresponding to $k=r+\ell-s$ appears due to the pole of $\zeta(t+k+s)$ and not $\Gamma(t+k)$. From \eqref{M1N}, we get \eqref{T2C3}. \\  
		Case IV: Let $s=\ell+1$. Then $\zeta(s+t+k)$ has no pole since $s=\ell+1$ and $ k \ne r-1$. $\zeta(r+s+t-1)$ and $\zeta'(r+s+t-1)$ have a  simple and a double pole respectively since $s = \ell+1$.  $\Gamma(t+r-1)$ has a simple pole. $\psi(t+r-1)$ has a simple pole. Also, use the following power series, around $t=1-r-\ell$,
		\begin{align}
		&\frac{\Gamma(t+r-1)}{\Gamma(t)}= (-1)^{r-1}\frac{ (r+\ell-1)!}{\ell!}-(-1)^{r}\frac{ (r+\ell-1)!}{\ell!} (\psi(\ell+1)-\psi(r+\ell)) (t-(1-r-\ell))  \notag \\
		& \qquad -(-1)^{r}\frac{ (r+\ell-1)!}{2\ell!} \left( (\psi(\ell+1)-\psi(r+\ell))^2 - \psi'(\ell+1)+\psi'(r+\ell)  \right)(t-(1-r-\ell))^2 \notag \\& \qquad + O(|(t-(1-r-\ell))|^3), \label{Glim4}
		\end{align}
		\begin{align}\label{xlim5}
		\frac{1}{x^{t+r-1}} = x^{\ell}-x^{\ell} \log(x) (t-(1-r-\ell)) + \frac{1}{2} x^{\ell} \log^2(x) (t-(1-r-\ell))^2 +O(|(t-(1-r-\ell))|^3).
		\end{align}
		Use \eqref{Glim4} and \eqref{xlim5} in \eqref{M1N} to get the required result \eqref{T2C4}. 
	\end{proof}
	Next, we are interested to get Kronecker limit type formula for $\Theta(r,s,t,x)$ in the \emph{third variable} $t$, around $t= 2-r-s$. We can obtain such a result for fixed $r,s \in \mathbb{C}$ such that $r+s \in \mathbb{N}$ and $r+s \ge 2$ in the following way: 
	\begin{itemize}
		\item $r, s \in \mathbb{N}$: It reduces to case IV of Theorem \ref{2ndpolar} upon taking $s=\ell+1$.
		\item $s \in \mathbb{N}$ and $r \in \mathbb{Z} \setminus \mathbb{N}$: It reduces to case II of Theorem \ref{1stpolar} upon taking $s=\ell+1$.
		\item $r \in \mathbb{N}$ and $s \in \mathbb{Z} \setminus \mathbb{N}$: It reduces to the previous case, $s \in \mathbb{N}$ and $r \in \mathbb{Z} \setminus \mathbb{N}$, after using \eqref{Tinv}.
	\end{itemize}
	Hence, for a fixed $r,s \in  \mathbb{C} \setminus \mathbb{Z}$, we state the formula as the following theorem.  
	
	\begin{theorem}\label{3rdpolar}
		Let  $r,s \in  \mathbb{C} \setminus \mathbb{Z}$ such that $r+s \in \mathbb{N}$ and $r+s \ge 2$. Around $t=2-r-s$, we have
		\begin{align}
			\Theta(r,s,t,x) & = (-1)^{r+s} x^{s-1} (r+s-2)! \Gamma(1-r) \Gamma(1-s) \notag \\
			&\quad + \sum_{k=0}^{r+s-2} \binom{r+s-2}{k} x^{r+s-2-k}  \zeta(r-k) \zeta(k-r+2) + O\left( |t-(2-r-s)| \right). \label{T3}
		\end{align}
	\end{theorem}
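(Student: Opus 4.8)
The plan is to reuse the analytic-continuation mechanism from the proof of Theorem~\ref{1stpolar}, the new feature being that the target point $t_0:=2-r-s$ is at once a pole of $\Gamma(t)$ and a pole of the zeta factor $\zeta(t+s+r-1)$ appearing in \eqref{PE}, so that a simple zero and a simple pole conspire to produce a finite value (explaining why \eqref{T3} carries no principal part). Since $r\in\mathbb{C}\setminus\mathbb{Z}$ forces $r\notin\mathbb{N}$, Proposition~\ref{first} is available. First I would fix $M=r+s+\lfloor|\Re(s)|\rfloor+1$; with this choice the small-$y$ estimate coming from \eqref{BOr} and \eqref{BOs}, together with the large-$y$ bound \eqref{BO1} and \cite[Theorem~2.2]{temme}, shows that the integral on the left of \eqref{PE} is analytic in $t$ on a neighbourhood of $t_0$ (its integrand being $O(y^{\epsilon-1})$ near $0$ for some $\epsilon>0$). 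Analytic continuation of \eqref{PE} then expresses $\Theta(r,s,t,x)$, for $t$ near $t_0$, as this analytic integral together with the boundary term $\Gamma(1-r)\Gamma(t+r-1)x^{1-t-r}\zeta(t+s+r-1)/\Gamma(t)$ and the finite sum $\sum_{k=0}^{M}\frac{(-1)^k}{k!}\zeta(r-k)\frac{\Gamma(t+k)}{\Gamma(t)}x^{-t-k}\zeta(t+s+k)$.

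Next I would send $t\to t_0$. Because $r+s\in\mathbb{N}$ with $r+s\ge2$, the point $t_0=2-(r+s)$ is a non-positive integer, so $1/\Gamma(t)$ vanishes simply there; as the integral is analytic and hence bounded at $t_0$, its contribution is $O(|t-t_0|)$ and furnishes the error term. In the boundary term, $\Gamma(t+r-1)\to\Gamma(1-s)$ and $x^{1-t-r}\to x^{s-1}$ remain finite, the hypothesis $s\notin\mathbb{Z}$ guaranteeing that $1-s$ is not a pole of $\Gamma$. The only singular factor is $\zeta(t+s+r-1)=\zeta\big(1+(t-t_0)\big)$, whose simple pole cancels the simple zero of $1/\Gamma(t)$: writing the residue of $\Gamma$ at $t_0=-(r+s-2)$ as $(-1)^{r+s}/(r+s-2)!$ and using $\zeta(1+u)=u^{-1}+\gamma+O(u)$, the quotient $\zeta(t+s+r-1)/\Gamma(t)\to(-1)^{r+s}(r+s-2)!$. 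Multiplying by $\Gamma(1-r)\Gamma(1-s)x^{s-1}$ yields exactly the first term $(-1)^{r+s}x^{s-1}(r+s-2)!\,\Gamma(1-r)\Gamma(1-s)$ of \eqref{T3}.

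For the sum I would use $\Gamma(t+k)/\Gamma(t)=t(t+1)\cdots(t+k-1)$, the same polynomial identity employed at \eqref{Glim1}. Evaluated at $t_0=2-(r+s)$ this product acquires the factor $t_0+(r+s-2)=0$ precisely when $k\ge r+s-1$, so all such terms drop out in the limit; for $0\le k\le r+s-2$ it equals $(-1)^k(r+s-2)!/(r+s-2-k)!$. Since $r\notin\mathbb{Z}$, the factors $\zeta(r-k)$ and $\zeta(t_0+s+k)=\zeta(k-r+2)$ are finite (and, crucially, no zeta factor develops a pole at $t_0$ that could rescue a vanishing term), while $x^{-t-k}\to x^{r+s-2-k}$; after the two signs $(-1)^k$ cancel, the surviving terms combine into $\sum_{k=0}^{r+s-2}\binom{r+s-2}{k}x^{r+s-2-k}\zeta(r-k)\zeta(k-r+2)$. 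As each surviving piece is analytic at $t_0$, it equals its value there up to $O(|t-t_0|)$, and adding the three contributions gives \eqref{T3}. I expect the main obstacle to be the careful bookkeeping of the coincident zero and pole in the boundary term and the verification---this is exactly where $r,s\notin\mathbb{Z}$ enters---that no other zeta factor is singular at $t_0$; were such a pole left uncancelled, $\Theta(r,s,t,x)$ would fail to be finite at $t_0$ and the clean formula \eqref{T3} would break down.
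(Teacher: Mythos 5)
Your proposal is correct and follows essentially the same route as the paper: analytic continuation of \eqref{PE} with a suitable $M$, the vanishing of the integral term due to the simple zero of $1/\Gamma(t)$ at $t=2-r-s$, the cancellation of that zero against the simple pole of $\zeta(t+s+r-1)$ in the boundary term, and the polynomial identity $\Gamma(t+k)/\Gamma(t)=t(t+1)\cdots(t+k-1)$ to isolate the surviving terms $0\le k\le r+s-2$ of the sum. The residue bookkeeping and the role of $r,s\notin\mathbb{Z}$ are handled exactly as in the paper's proof.
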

	\begin{proof}
		Then, for $\textup{Re}(t)>1-r-s$, \eqref{BOr} and \eqref{BOs} gives, 
		\begin{align}
			y^{t-1} \textup{Li}_s (e^{-xy}) \bigg( \hspace{-0.2 cm} \textup{ Li}_r (e^{-y}) - \Gamma(1-r) y^{r-1} - \sum_{\substack{k=0 }}^{r+ s+\lfloor|\textup{Re}(s)|\rfloor } (-1)^k\zeta(r-k) \frac{y^k}{k!}\bigg) = O\left(y^{\epsilon-1}\right), \label{BOt2}
		\end{align} 
		for some $\e >0$. For $M=r+s+\lfloor|\textup{Re}(s)|\rfloor $, from \eqref{BO1} and \eqref{BOt2}, the integral on the left-hand side of \eqref{PE} is now defined and analytic in the extended region $\textup{Re}(t)>1-r-s$  by using \cite[Theorem 2.2]{temme}. Hence, due to the pole of $\Gamma(t)$ at $t=2-r-s$, we see
		\begin{align*}
			\lim_{t \to 2-r-s} \Bigg( \frac{1}{\Gamma(t)} \int_{0}^{\infty} y^{t-1} \textup{Li}_s (e^{-xy}) \bigg( \hspace{-0.2 cm} \textup{ Li}_r (e^{-y}) - \Gamma(1-r) y^{r-1} - \sum_{\substack{k=0 }}^{r+s+ \lfloor|\textup{Re}(s)|\rfloor } (-1)^k\zeta(r-k) \frac{y^k}{k!}\bigg) \, dy \Bigg) =0.
		\end{align*} 
		Hence, from \eqref{PE}
		\begin{align}
			\Theta(r,s,t,x) &= \frac{\Gamma(1-r)\Gamma(t+r-1)}{ x^{t+r-1}\Gamma(t)}\zeta(r+s+t-1) \notag \\
			& \quad + \sum_{\substack{k=0}}^{r+s+ \lfloor|\textup{Re}(s)|\rfloor } \frac{(-1)^k\zeta(r-k)\Gamma(t+k)}{ x^{t+k}k!\Gamma(t)} \zeta(s+t+k)  + O\left( |t-(2-r-s)| \right). \label{M1C}
		\end{align}
		Since $\Gamma(t)$ has a pole at $t=2-r-s$, only the terms with a pole in the numerator survive, as $t \to 2-r-s$. We list the poles of numerator at $t=2-r-s$: Note that, $\Gamma(t+k)$ has a simple pole only for $k \leq r+s-2$. For any $k$, $\zeta(s+t+k)$ has no pole since $r \notin \mathbb{Z}$ . $\Gamma(t+r-1)$ has no pole since $s \notin \mathbb{Z}$. $\zeta(r+s+t-1)$ has a simple pole. Hence, use the laurent series expansions of the functions on the right-hand side of \eqref{M1C} to get the required result \eqref{T3}.
	\end{proof}

	\section{Kronecker limit type formula for $\Theta(r,s,t,x)$ in the second variable}\label{Section4}
	The method used in Section \ref{Sec2} cannot be used to find Kronecker limit type formula for $\Theta(r,s,t,x)$ in the \textit{second variable} $s$. Hence, we come with an alternative method to obtain the same, which involves the series evaluations of $\Theta(r,s,t,x)$ in terms of Herglotz-Zagier type functions \cite{zagier}. The evaluation is obtained by the partial fraction method, which is very effective in the theory of multiple zeta functions. For instance, Gangl, Kaneko and Zagier \cite{partialfraction} used the partial fraction, for integers $i,j \geq 2$, 
	\begin{align*}
		\frac{1}{m^i n^j} =  \sum_{a+b=i+j} \left( \binom{a-1}{i-1} \frac{1}{(m+n)^an^b}    + \binom{a-1}{j-1}  \frac{1}{(m+n)^an^b}   \right),
	\end{align*}
	to prove the Euler decomposition formula, 
	\begin{align*}
		\zeta(i) \zeta(j) = \sum_{a+b=i+j} \left( \binom{a-1}{i-1} \zeta(a,b)  +  \binom{a-1}{j-1} \zeta(a,b)  \right),
	\end{align*} 
	where $\zeta(s_1,s_2)$ is the \textit{double zeta function}. Guo and Xie \cite{guoxie} used the partial fraction method to obtain natural shuffle algebra structure for the integral representations of the multiple-zeta values. We prove the partial fraction in the following Lemma \ref{parfra} before using it to obtain the series evaluation.
		\begin{lemma}\label{parfra}
		Fix $r, t \in\mathbb{N} \cup \{0\}$ such that $r+t\geq1$. Then, for $n,y>0$, the following partial fraction holds,
		\begin{align}
			\frac{1}{n^r (n+y)^t} = (-1)^r \sum_{j=0}^{t-1} \binom{j+r-1}{j} \frac{1}{y^{j+r} (n+y)^{t-j}} + \sum_{i=0}^{r-1} \binom{i+t-1}{i} \frac{(-1)^i}{n^{r-i} y^{t+i}}. \label{parts1}
		\end{align}
		with the convention that $\binom{-1}{0}=1$, $\binom{c}{-1}=0$ for any $c \in \mathbb{Z}\backslash \{-1\}$ with an exception $\binom{-1}{-1} =1$.
	\end{lemma}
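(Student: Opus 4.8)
The plan is to regard the left-hand side as a proper rational function of $n$, with $y>0$ held fixed as a parameter, and to read off its partial fraction decomposition directly. The denominator $n^r(n+y)^t$ has degree $r+t\ge 1$ while the numerator is constant, so $1/(n^r(n+y)^t)$ is a proper rational function whose only poles are at $n=0$ (of order $r$) and at $n=-y$ (of order $t$). By the uniqueness of the partial fraction decomposition over $\mathbb{C}(n)$, it equals the sum of its two principal parts, and the whole argument reduces to computing these principal parts and matching them with the two sums in \eqref{parts1}.

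For the pole at $n=0$ I would expand the regular factor $(n+y)^{-t}$ as a binomial (Taylor) series in $n$,
\begin{equation*}
  (n+y)^{-t} = y^{-t}\left(1+\frac{n}{y}\right)^{-t} = \sum_{k\ge 0}(-1)^k\binom{t+k-1}{k}\frac{n^k}{y^{t+k}},
\end{equation*}
using $\binom{-t}{k}=(-1)^k\binom{t+k-1}{k}$. Multiplying by $n^{-r}$ and keeping only the singular terms $n^{-r},\dots,n^{-1}$, i.e. $0\le k\le r-1$ with $i=k$, produces exactly
\begin{equation*}
  \sum_{i=0}^{r-1}(-1)^i\binom{i+t-1}{i}\frac{1}{n^{r-i}\,y^{t+i}},
\end{equation*}
the second sum in \eqref{parts1}.

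For the pole at $n=-y$ I set $w=n+y$ and expand the regular factor $n^{-r}=(w-y)^{-r}$ in powers of $w$,
\begin{equation*}
  (w-y)^{-r} = (-1)^r y^{-r}\left(1-\frac{w}{y}\right)^{-r} = (-1)^r\sum_{j\ge 0}\binom{j+r-1}{j}\frac{w^j}{y^{r+j}},
\end{equation*}
where the sign $(-1)^r$ comes from $(w-y)^{-r}=(-1)^r y^{-r}(1-w/y)^{-r}$. Multiplying by $(n+y)^{-t}=w^{-t}$ and retaining the singular terms $w^{-t},\dots,w^{-1}$, i.e. $0\le j\le t-1$, gives precisely the first sum in \eqref{parts1} together with its prefactor $(-1)^r$. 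Adding the two principal parts then yields \eqref{parts1}.

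Finally I would dispose of the boundary cases. When $r=0$ the pole at $n=0$ disappears, the second (empty) sum vanishes, and the first sum must collapse to $1/(n+y)^t$; this is forced by the conventions $\binom{-1}{0}=1$ and $\binom{j-1}{j}=0$ for $j\ge1$, and the case $t=0$ is symmetric. The remaining conventions (in particular $\binom{-1}{-1}=1$) are what keep the two displayed sums consistent under Pascal's rule, which is the one place care is needed if one instead proves \eqref{parts1} by induction on $r+t$ via the elementary recurrence
\begin{equation*}
  \frac{1}{n^r(n+y)^t}=\frac1y\left(\frac{1}{n^r(n+y)^{t-1}}-\frac{1}{n^{r-1}(n+y)^t}\right).
\end{equation*}
The only real obstacle is bookkeeping: getting the identity $\binom{-m}{k}=(-1)^k\binom{m+k-1}{k}$ and the sign $(-1)^r$ from $(w-y)^{-r}$ correct, and invoking the standard fact that a proper rational function equals the sum of its principal parts, so that the regular parts of the two local expansions need not be tracked separately.
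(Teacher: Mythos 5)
Your proof is correct, but it takes a genuinely different route from the paper's. You invoke the standard fact that a proper rational function of $n$ equals the sum of its principal parts at its poles (here $n=0$ of order $r$ and $n=-y$ of order $t$), and you read off each principal part from the binomial expansion of the regular factor at that pole: $(n+y)^{-t}=\sum_{k\ge 0}(-1)^k\binom{t+k-1}{k}n^k y^{-t-k}$ near $n=0$ and $(w-y)^{-r}=(-1)^r\sum_{j\ge 0}\binom{j+r-1}{j}w^j y^{-r-j}$ near $w=n+y=0$; both expansions and the sign bookkeeping check out, and the truncations at $i\le r-1$ and $j\le t-1$ reproduce exactly the two sums in \eqref{parts1}. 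The paper instead works backwards: it puts the right-hand side over the common denominator $n^r y^{r+t}(n+y)^t$, expands the resulting numerator $A(n,y,r,t)$ as a polynomial in $n$, and kills each coefficient of $n^a$ for $1\le a\le r+t-1$ by explicit binomial-sum identities (one of which is quoted from \textit{Mathematica}), concluding $A\equiv A(0,y,r,t)=y^{r+t}$. Your argument is shorter and more conceptual, replaces the computer-assisted identity with the uniqueness of the partial fraction decomposition, and makes the degenerate cases $r=0$ and $t=0$ transparent via the stated binomial conventions; the paper's computation has the minor virtue of being entirely self-contained symbol manipulation. Either way the lemma is established.
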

	\begin{proof}
		Start with the expression on the right-hand side of \eqref{parts1} and take least common denominator to get,
		\begin{align}
			&(-1)^r \sum_{j=0}^{t-1} \binom{j+r-1}{j} \frac{y^{t-j}(n+y)^{j}}{y^{r+t} (n+y)^{t}} + \sum_{i=0}^{r-1} \binom{i+t-1}{i} \frac{(-1)^i n^i y^{r-i}}{n^{r} y^{r+t}} \notag \\
			&=(-1)^r \sum_{j=0}^{t-1} \binom{j+r-1}{j} \frac{n^{r} y^{t-j}(n+y)^{j}}{n^{r} y^{r+t} (n+y)^{t}} + \sum_{i=0}^{r-1} \binom{i+t-1}{i} \frac{(-1)^i n^i y^{r-i} (n+y)^{t}}{n^{r} y^{r+t} (n+y)^{t}} \notag \\
			&= \frac{A(n,y,r,t)}{n^r y^{r+t} (n+y)^t}, \label{calc}
		\end{align}
		where 
		\begin{align*}
			A(n,y,r,t):= (-1)^r  n^{r} \sum_{j=0}^{t-1} \binom{j+r-1}{j} y^{t-j}(n+y)^{j} + (n+y)^{t} \sum_{i=0}^{r-1} \binom{i+t-1}{i} (-1)^i n^i y^{r-i}.
		\end{align*}
		Clearly, for a fixed $r,t \in \mathbb{N}$, $A(n,y,r,t)$ is a polynomial of two variables $n,y$.
		\begin{align}
			A(n,y,r,t)&= (-1)^r  n^{r} \sum_{j=0}^{t-1} \binom{j+r-1}{j} y^{t-j} \sum_{c=0}^{j} \binom{j}{c} n^c y^{j-c}  + \sum_{d=0}^{t} \binom{t}{d} n^d y^{t-d} \sum_{i=0}^{r-1} \binom{i+t-1}{i} (-1)^i n^i y^{r-i} \notag \\
			&=  (-1)^r   \sum_{j=0}^{t-1} \sum_{c=0}^{j} \binom{j+r-1}{j}  \binom{j}{c} n^{r+c}   y^{t-c}   + \sum_{d=0}^{t} \sum_{i=0}^{r-1}  \binom{i+t-1}{i} \binom{t}{d} (-1)^i n^{d+i} y^{t-d+r-i} \notag \\
			&=  (-1)^r  \sum_{c=0}^{t-1} n^{r+c}  \sum_{j=c}^{t-1}  \binom{j+r-1}{j}  \binom{j}{c}    y^{t-c}   + \sum_{d=0}^{t} \sum_{i=0}^{r-1}  \binom{i+t-1}{i} \binom{t}{d} (-1)^i n^{d+i} y^{t-d+r-i}.   \label{last-n}
		\end{align}
		$A(n,y,r,t)$ can be seen as a polynomial in $n$ with coefficients as polynomials in $y$. Clearly, the highest degree of $n$ possible in $A(n,y,r,t)$ is $r+t-1$, where $r,t \in \mathbb{N}$ are fixed. We now show explicitly evaluate the coefficients of $n^a$ for $1\leq a\leq r+t-1$ in two cases.\\
		Case I: For $1\leq a\leq r-1$.\\
		Observe that the first term on the right-hand side of \eqref{last-n} does not contribute to the coefficient of $n^a$ since least power of $n$ possible is $r$. From the second term, the only way to get $n^a$ is when $d\leq a$ and $i=a-d$. Hence we get,
		\begin{align}
			\textup{Coefficient of }n^a \textup{ in } A(n,y,r,t) & \hspace{0.23 cm}= \sum_{d=0}^{a} \binom{a-d+t-1}{a-d} \binom{t}{d} (-1)^{a-d}y^{t+r-a} \notag \\
			&  \hspace{0.23 cm}= (-1)^{a} y^{t+r-a} \sum_{d=0}^{a} \binom{a-d+t-1}{a-d} \binom{t}{d} (-1)^d  \notag  \\
			&\buildrel \rm \emph{d} \rightarrow \emph{a}-\emph{d} \over = y^{t+r-a} \sum_{d=0}^{a} \binom{d+t-1}{d} \binom{t}{a-d} (-1)^d   \notag \\
			&\hspace{0.23 cm}=   y^{t+r-a} \sum_{d=0}^{a} \frac{(d+t-1)!}{d!(t-1)!} \cdot \frac{t!}{(a-d)!(d+t-a)!} \cdot (-1)^d   \notag  \\
			&\hspace{0.23 cm}=  \sum_{d=0}^{a} \frac{(d+t-1)!}{d!(a-d)!(d+t-a)!} (-1)^d  \notag  \\
			& \hspace{0.23 cm}= \frac{t y^{t+r-a}}{a!} \sum_{d=0}^{a} (-1)^d \binom{a}{d} p(d), \label{p(d)1}
		\end{align}
		where $p(d)= \frac{(d+t-1)!}{(d+t-a)!} = (d+t-1)(d+t-2) \cdots (d+t-a+1)$ is a polynomial in $d$ of degree $a-1$. As explained in \cite[Section 3]{Spivey} and \cite[Corollary 2]{ruiz}, the sum on the right-hand side of \eqref{p(d)1} is zero, since the polynomial $p(d)$ has a degree smaller than $a$. Hence, the coefficient of $n^a$ in $A(n,y,r,t)$ is zero for $1\leq a\leq r-1$.\\
		Case II: For $r\leq a\leq r+t-1$. \\
		Observe that the term $c=a-r$ in the first term of the right-hand side of \eqref{last-n} alone contributes to the coefficient of $n^a$. The condition $ a\leq r+t-1$ ensures that $a-r \leq t-1$. From the second term of the right-hand side of \eqref{last-n}, for any $i$, $d=a-i$ alone contributes to the coefficient of $n^a$. Since  $r\leq a\leq r+t-1$, we have $d=a-i\leq t$ for any $i$. Since $i\leq r-1$, we always have $d\geq0$. Hence we get,
		\begin{align}
			\textup{Coefficient of }n^a \textup{ in } A(n,y,r,t) &= y^{t+r-a}\bigg(	A_1(n,y,r,t)+A_2(n,y,r,t)\bigg). \label{AA1A2}
		\end{align}
		where,
		\begin{align}
			A_1(n,y,r,t)&:= (-1)^r  \sum_{j=a-r}^{t-1} \binom{j+r-1}{j} \binom{j}{a-r}, \label{A1def} \\  
			A_2(n,y,r,t)&:= \sum_{i=0}^{r-1} \binom{i+t-1}{i} \binom{t}{a-i} (-1)^i. \label{A2def}
		\end{align}
		We now evaluate both $	A_1(n,y,r,t)$ and $	A_2(n,y,r,t)$ explicitly. Firstly, after expanding the binomials in $A_1(n,y,r,t)$, we get,
		\begin{align}
				A_1(n,y,r,t) &= \frac{(-1)^r}{(r-1)!(a-r)!} \sum_{j=a-r}^{t-1} \frac{(j+r-1)!}{(j+r-a)!} \notag \\
				&\hspace{-0.44 cm}\buildrel \rm \emph{j} \rightarrow \emph{i}+\emph{a}-\emph{r} \over = \frac{(-1)^r}{(r-1)!(a-r)!} \sum_{i=0}^{r+t-a-1} \frac{(i+a-1)!}{i!} \notag \\
				&= \frac{(-1)^r(a-1)!}{(r-1)!(a-r)!} \sum_{i=0}^{r+t-a-1} \binom{i+a-1}{i} \notag \\
				&= \frac{(-1)^r(a-1)!}{(r-1)!(a-r)!} \binom{r+t-1}{r+t-a-1} \notag  \\
				&=\frac{(-1)^r}{a} \times \frac{(r+t-1)!}{(r-1)!(t-1)!} \times \frac{(t-1)!}{(a-r)!(r+t-a-1)!} \notag \\
				&= \frac{(-1)^r (r+t-a)}{a} \binom{r+t-1}{t} \binom{t}{a-r}. \label{A1Fin}
		\end{align}
		Next, we evaluate $A_2(n,y,r,t)$. Since $\binom{t}{a-i}=0$ for $i<a-t$, it is enough to consider the sum between $a-t\leq i \leq r-1$, and on expanding the binomials, we get,
		\begin{align}
			A_2(n,y,r,t) &=t \sum_{i=a-t}^{r-1} \frac{(-1)^i}{i! (a-i)!} \times \frac{(i+t-1)!}{(i+t-a)!} \notag \\
			&\hspace{-0.44 cm}\buildrel \rm \emph{i} \rightarrow \emph{j}+\emph{a}-\emph{t} \over = t \sum_{j=0}^{r+t-a-1} \frac{(-1)^{j+a-t}}{(j+a-t)!(t-j)!} \times \frac{(j+a-1)!}{j!} \notag \\
			&= \frac{(-1)^{a+t}}{(t-1)!} \sum_{j=0}^{r+t-a-1} (-1)^j \binom{t}{j} \frac{(j+a-1)!}{(j+a-t)!} \notag \\
			&=  \frac{(-1)^{a+t}}{(t-1)!}  \times \frac{(-1)^{a+r+t}(a-r-t)}{at (r-1)!} \binom{t}{r+t-a} (r+t-1)! \notag \\
			&= - \frac{(-1)^r(r+t-a)}{a} \binom{r+t-1}{t} \binom{t}{a-r}, \label{A2Fin}
		\end{align}
		where we have used the fact $\binom{c}{d} = \binom{c}{c-d}$ in the last step and the following identity obtained from \textit{Mathematica 11} in the second last step,
		\begin{align*}
			\sum_{j=0}^{r+t-a-1} (-1)^j \binom{t}{j} \frac{(j+a-1)!}{(j+a-t)!}=\frac{(-1)^{a+r+t}(a-r-t)}{at (r-1)!} \binom{t}{r+t-a} (r+t-1)!.
		\end{align*}
		On substituting \eqref{A1Fin} and \eqref{A2Fin} in \eqref{AA1A2}, we can see that the coefficient of $n^a$ is zero for $r\leq a \leq r+t-1$.	Thus, both the cases combined, we have shown that the coefficient of $n^a$ is zero for all $1\leq a\leq r+t-1$, and thus, $A(n,y,r,t)$ is independent of $n$, and hence, 
		\begin{align*}
			A(n,y,r,t) = A(0,y,r,t)=y^{r+t}.
		\end{align*}
		Put this in \eqref{calc} to get the required result.
	\end{proof}	
	We now use Lemma \ref{parfra} to prove the follwing theorem for the series evaluations of $\Theta(r,s,t,x)$.
		\begin{theorem}\label{THM-EVAL}
		For $r,t \in \mathbb{N} \cup \{0\}$ such that $r+t\geq1$, and $s \in \mathbb{C}$ such that $\textup{Re}(s)>2-r-t$, we have, 
		\begin{align}
			\Theta(r,s,t,x)  &=  \sum_{i=0}^{r-2} \frac{(-1)^i}{x^{t+i}}  \binom{i+t-1}{i}  \zeta(s+t+i) \zeta(r-i) - \frac{(-1)^r}{x^{r+t-1}} \binom{r+t-2}{t-1}  \sum_{m=1}^{\infty} \frac{\gamma + \psi(mx+1)}{m^{r+s+t-1}} \notag \\
			&\quad + (-1)^r  \sum_{j=0}^{t-2} \binom{j+r-1}{j}  \frac{(-1)^{t-j}}{(t-j-1)!} \frac{1}{x^{j+r}} \sum_{m=1}^{\infty} \frac{\psi^{(t-j-1)}(mx+1)}{m^{r+s+j}}, \label{T-H-EV}
		\end{align}
		with the convention that $\binom{-1}{0}=1$, $\binom{c}{-1}=0$ for any $c \in \mathbb{Z}\backslash \{-1\}$ with an exception $\binom{-1}{-1} =1$.
	\end{theorem}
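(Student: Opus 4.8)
The plan is to substitute the partial fraction identity of Lemma~\ref{parfra}, taken with $y=mx$, into each summand $\frac{1}{n^{r}m^{s}(n+mx)^{t}}$ of the defining series \eqref{Tdef}, and then to carry out the summations over $n$ and $m$. Writing $\frac{1}{n^{r}(n+mx)^{t}}$ via \eqref{parts1} splits the double series into two families of terms (each still carrying the factor $m^{-s}$): an ``$i$-family'' $\binom{i+t-1}{i}\frac{(-1)^{i}}{n^{r-i}(mx)^{t+i}}$ for $0\le i\le r-1$, and a ``$j$-family'' $(-1)^{r}\binom{j+r-1}{j}\frac{1}{(mx)^{j+r}(n+mx)^{t-j}}$ for $0\le j\le t-1$. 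I would sum over $n$ first, for fixed $m$; this is legitimate once $r+t\ge2$, where the inner series $\sum_{n\geq1}\frac{1}{n^{r}(n+mx)^{t}}$ converges absolutely. The essential subtlety, anticipated now, is that exactly two of these terms diverge when summed over $n$ individually and so must be paired before the $n$-summation is performed.

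I would first dispose of the convergent terms. For $0\le i\le r-2$ the inner sum is $\sum_{n\geq1}n^{-(r-i)}=\zeta(r-i)$, with $r-i\ge2$, and the remaining $m^{-(s+t+i)}$ then sums to $\zeta(s+t+i)$; this reproduces the first sum in \eqref{T-H-EV}. For $0\le j\le t-2$ the inner sum is $\sum_{n\geq1}(n+mx)^{-(t-j)}$ with $t-j\ge2$, which I would identify with a polygamma value via $\sum_{n\geq0}(n+z)^{-k}=\frac{(-1)^{k}}{(k-1)!}\psi^{(k-1)}(z)$ evaluated at $z=mx+1$, $k=t-j$; summing the surviving factor over $m$ yields the third sum in \eqref{T-H-EV}.

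The crux is the pair $i=r-1$ and $j=t-1$, for which the inner sums $\sum_{n\geq1}\frac1n$ and $\sum_{n\geq1}\frac{1}{n+mx}$ each diverge. The key point is that, upon factoring out the common $\binom{r+t-2}{t-1}(mx)^{-(r+t-1)}$ (using $\binom{r+t-2}{r-1}=\binom{r+t-2}{t-1}$), the respective coefficients are $(-1)^{r-1}$ and $(-1)^{r}$, so summing the two terms together over $n$ produces the convergent series $(-1)^{r-1}\sum_{n\geq1}\bigl(\frac1n-\frac1{n+mx}\bigr)=(-1)^{r-1}\bigl(\gamma+\psi(mx+1)\bigr)$, by the standard series for the digamma function. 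Summing over $m$ and using $(-1)^{r-1}=-(-1)^{r}$ gives precisely the middle term of \eqref{T-H-EV}. This pairing is the main obstacle: one may not interchange the finite partial-fraction sum with the infinite $n$-sum termwise, so I would combine the $i=r-1$ and $j=t-1$ summands at the level of each fixed $n$ and only afterwards let $n\to\infty$.

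Finally, the outer sum over $m$ is where the hypothesis $\Re(s)>2-r-t$ is used: since $\gamma+\psi(mx+1)\sim\log(mx)$ as $m\to\infty$, the middle series converges exactly when $\Re(s+r+t-1)>1$, i.e. $\Re(s)>2-r-t$, and the zeta and polygamma series converge in the same or a larger half-plane. Performing the whole computation first inside the region $\mathscr{D}$ of absolute convergence, where every rearrangement is justified, establishes \eqref{T-H-EV} there; since both sides are holomorphic in $s$ throughout $\Re(s)>2-r-t$, the identity propagates to the full stated range by analytic continuation.
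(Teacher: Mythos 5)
Your proposal is correct and follows essentially the same route as the paper: substitute Lemma \ref{parfra} with $y=mx$ into the double series, sum the $i\le r-2$ and $j\le t-2$ families to zeta products and polygamma sums via \eqref{psidash}, and pair the $i=r-1$ and $j=t-1$ terms using $\binom{r+t-2}{r-1}=\binom{r+t-2}{t-1}$ so that $\sum_{n}\left(\frac{1}{n}-\frac{1}{n+mx}\right)=\gamma+\psi(mx+1)$. Your closing remarks on where $\Re(s)>2-r-t$ enters and on analytic continuation are a slightly more careful justification than the paper records, but the argument is the same.
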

	\begin{proof}
		From the definition \eqref{Tdef}, we can see 
		\begin{align}
			\Theta(r,s,t,x)&= \sum_{n=1}^{\infty} \sum_{m=1}^{\infty} \frac{1}{n^r m^s (n+mx)^t} = \sum_{m=1}^{\infty} \frac{1}{m^s} \sum_{n=1}^{\infty} \frac{1}{n^r (n+mx)^t}. \label{EvStart}
		\end{align}
		Use Lemma \ref{parfra} with $y=mx$ in \eqref{EvStart} to get,
		\begin{align*}
			\Theta(r,s,t,x) &= \sum_{m=1}^{\infty} \frac{1}{m^s} \sum_{n=1}^{\infty} \left( (-1)^r \sum_{j=0}^{t-1} \binom{j+r-1}{j} \frac{1}{(mx)^{j+r} (n+mx)^{t-j}} + \sum_{i=0}^{r-1} \binom{i+t-1}{i} \frac{(-1)^i}{n^{r-i} (mx)^{t+i}} \right) \\
			&= \sum_{m=1}^{\infty} \frac{1}{m^s} \sum_{n=1}^{\infty} \left( (-1)^r \sum_{j=0}^{t-2} \binom{j+r-1}{j} \frac{1}{(mx)^{j+r} (n+mx)^{t-j}} + \sum_{i=0}^{r-2} \binom{i+t-1}{i} \frac{(-1)^i}{n^{r-i} (mx)^{t+i}} \right) \\
			&\quad+  \sum_{m=1}^{\infty} \frac{1}{m^s} \sum_{n=1}^{\infty} \left( (-1)^r \binom{r+t-2}{t-1} \frac{1}{(mx)^{r+t-1}(n+mx)} + (-1)^{r-1} \binom{r+t-2}{r-1} \frac{1}{n (mx)^{r+t-1}}  \right) \\
			&= \sum_{m=1}^{\infty} \frac{1}{m^s} \sum_{n=1}^{\infty} \left( (-1)^r \sum_{j=0}^{t-2} \binom{j+r-1}{j} \frac{1}{(mx)^{j+r} (n+mx)^{t-j}} + \sum_{i=0}^{r-2} \binom{i+t-1}{i} \frac{(-1)^i}{n^{r-i} (mx)^{t+i}} \right) \\
			&\quad - (-1)^r \binom{r+t-2}{t-1} \frac{1}{x^{r+t-1}} \sum_{m=1}^{\infty} \frac{1}{m^{r+s+t-1}} \sum_{n=1}^{\infty} \left( \frac{1}{n} - \frac{1}{n+mx}  \right),
		\end{align*}
		where we have used $\binom{r+t-2}{r-1} = \binom{r+t-2}{t-1}$. From the series definition of the digamma function $\psi(x)$, we have,
		\begin{align*}
			\Theta(r,s,t,x) &= \sum_{m=1}^{\infty} \frac{1}{m^s} \sum_{n=1}^{\infty} \left( (-1)^r \sum_{j=0}^{t-2} \binom{j+r-1}{j} \frac{1}{(mx)^{j+r} (n+mx)^{t-j}} + \sum_{i=0}^{r-2} \binom{i+t-1}{i} \frac{(-1)^i}{n^{r-i} (mx)^{t+i}} \right) \\
			&\quad-(-1)^r \binom{r+t-2}{t-1} \frac{1}{x^{r+t-1}} \sum_{m=1}^{\infty} \frac{\gamma + \psi(mx+1)}{m^{r+s+t-1}}.
		\end{align*} 
		From \cite[Corollary 2]{coffey}, we have, for $j \in \mathbb{N}, j \geq 2$
		\begin{align}
			\frac{(-1)^j}{(j-1)!}\psi^{(j-1)}(z+1) = \sum_{\ell=1}^{\infty} \frac{1}{(\ell+z)^{j}}. \label{psidash}
		\end{align}
		Hence, we can simplify $\Theta(r,s,t,x)$ as
		\begin{align*}
			\Theta(r,s,t,x) &= (-1)^r  \sum_{j=0}^{t-2} \binom{j+r-1}{j} \frac{1}{x^{j+r}} \sum_{m=1}^{\infty} \frac{1}{m^{r+s+j}} \sum_{n=1}^{\infty} \frac{1}{(n+mx)^{t-j}} \\ &\quad +  \sum_{i=0}^{r-2} \frac{(-1)^i}{x^{t+i}} \binom{i+t-1}{i}  \sum_{m=1}^{\infty} \frac{1}{m^{s+t+i}} \sum_{n=1}^{\infty}  \frac{1}{n^{r-i}} -  \frac{(-1)^r}{x^{r+t-1}} \binom{r+t-2}{t-1} \sum_{m=1}^{\infty} \frac{\gamma + \psi(mx+1)}{m^{r+s+t-1}}.
		\end{align*}
		Use \eqref{psidash} to get,
		\begin{align*}
			\Theta(r,s,t,x) &= (-1)^r  \sum_{j=0}^{t-2} \binom{j+r-1}{j}  \frac{(-1)^{t-j}}{(t-j-1)!} \frac{1}{x^{j+r}} \sum_{m=1}^{\infty} \frac{\psi^{(t-j-1)}(mx+1)}{m^{r+s+j}}   \\
			&\quad +  \sum_{i=0}^{r-2} \frac{(-1)^i}{x^{t+i}}  \binom{i+t-1}{i}  \zeta(s+t+i) \zeta(r-i) - \frac{(-1)^r}{x^{r+t-1}} \binom{r+t-2}{t-1}  \sum_{m=1}^{\infty} \frac{\gamma + \psi(mx+1)}{m^{r+s+t-1}}.
		\end{align*}
		This is the required result \eqref{T-H-EV}.
	\end{proof}
	\subsection{Mixed functional equations} The two-term functional equation given by Guinand \cite{apg3} has higher derivatives of $\psi(x)$ in the summand while the one given by Vlasenko--Zagier \cite[Proposition 4]{vz} has the higher power of $m$ in the denominator. We call the functional equation obtained in \eqref{mixed}, the mixed functional equations since they contain both, a series containing higher derivatives of $\psi(x)$ and containing higher powers of $m$ in the denominator each.
	\begin{remark}\label{REMARK1}
		The identity \eqref{newIDex} holds true.
	\end{remark}
	\begin{proof}
		Use \eqref{Tsplit} twice to get,
		\begin{align}
			\Theta(2,2,t,x) = \Theta(0,2,t+2,x) + 2x \Theta(1,1,t+2,x) + x^2 \Theta(2,0,t+2,x).  \label{remark1}
		\end{align}
		Use \eqref{Tinv} for the third term on the right-hand side of \eqref{remark1} and tend $t\to0$ to get,
		\begin{align*}
			\zeta^2(2) = \Theta(0,2,2,x) + 2x \Theta(1,1,2,x) + \Theta(0,2,2,\tfrac{1}{x}).
		\end{align*}
		Then, use Theorem \ref{THM-EVAL} thrice and simplify to get \eqref{newIDex}.
	\end{proof}
	
	 Such identitites are new in literature. We now give a family of mixed functional equations in the following theorem as a direct corollary of Theorem \ref{THM-EVAL}.
	\begin{theorem}\label{MIXEDthm}
		Let us define $\mathscr{F}(x)$ as follows,
		\begin{align*}
			\mathscr{F}(x):=& (-1)^r  \sum_{j=0}^{t-2} \binom{j+r-1}{j}  \frac{(-1)^{t-j}}{(t-j-1)!} \frac{1}{x^{j+r}} \sum_{m=1}^{\infty} \frac{\psi^{(t-j-1)}(mx+1)}{m^{2r+j}}  \notag \\ 
			&+ \sum_{i=0}^{r-2} \frac{(-1)^i}{x^{t+i}}  \binom{i+t-1}{i}  \zeta(r+t+i) \zeta(r-i) - \frac{(-1)^r}{x^{r+t-1}} \binom{r+t-2}{t-1}  \sum_{m=1}^{\infty} \frac{\gamma + \psi(mx+1)}{m^{2r+t-1}}.
		\end{align*}
		Then, we have,
		\begin{align}\label{mixed}
			\mathscr{F}(x) = x^{-t} \mathscr{F}\left(\frac{1}{x}\right)\hspace{-0.1 cm}.
		\end{align}
	\end{theorem}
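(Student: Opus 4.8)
The plan is to recognize $\mathscr{F}(x)$ as the diagonal specialization $\Theta(r,r,t,x)$ and then to read off the claimed identity directly from the inversion functional equation \eqref{Tinv}. First I would compare, term by term, the definition of $\mathscr{F}(x)$ with the series evaluation \eqref{T-H-EV} of Theorem \ref{THM-EVAL}. Setting $s=r$ in \eqref{T-H-EV}, the zeta factor $\zeta(s+t+i)$ becomes $\zeta(r+t+i)$, while the exponents $r+s+t-1$ and $r+s+j$ occurring in the denominators of the $\gamma+\psi$ sum and of the Herglotz--Zagier type sums involving $\psi^{(t-j-1)}$ become $2r+t-1$ and $2r+j$ respectively. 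These are precisely the quantities appearing in the definition of $\mathscr{F}$, so the comparison yields the identification $\mathscr{F}(x) = \Theta(r,r,t,x)$.

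With this identification in hand, the result is immediate. Specializing the inversion relation \eqref{Tinv} at $s=r$ gives $\Theta(r,r,t,x) = x^{-t}\,\Theta(r,r,t,\tfrac{1}{x})$. Applying Theorem \ref{THM-EVAL} a second time, now at the argument $1/x$, gives $\Theta(r,r,t,\tfrac{1}{x}) = \mathscr{F}(\tfrac{1}{x})$. Chaining the last two equalities produces $\mathscr{F}(x) = x^{-t}\,\mathscr{F}(\tfrac{1}{x})$, which is exactly \eqref{mixed}. Thus the theorem is a direct corollary of Theorem \ref{THM-EVAL} together with the symmetry \eqref{Tinv}.

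The one point requiring care, and the main (though routine) obstacle, is to ensure that the series evaluation \eqref{T-H-EV} and the inversion relation \eqref{Tinv} are simultaneously valid for the triple $(r,r,t)$. The evaluation of Theorem \ref{THM-EVAL} holds for $r,t\in\mathbb{N}\cup\{0\}$ with $r+t\ge 1$ and $\Re(s)>2-r-t$, which at $s=r$ reduces to $2r+t>2$; the inversion \eqref{Tinv} is first established on $\mathscr{D}$ and then carried over by the meromorphic continuation of $\Theta$. I would therefore run the argument in a range where $(r,r,t)\in\mathscr{D}$ and the right-hand side of \eqref{T-H-EV} converges, and then extend to the remaining admissible $(r,t)$ by analytic continuation, handling the degenerate low-weight cases (where the binomial conventions such as $\binom{-1}{0}=1$ take effect and several sums collapse) directly. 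Once this bookkeeping of the region of validity is settled, the identity \eqref{mixed} follows with no further computation.
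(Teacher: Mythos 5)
Your proposal is correct and is essentially identical to the paper's own proof, which simply puts $s=r$ in \eqref{T-H-EV} and substitutes into $\Theta(r,r,t,x)=x^{-t}\Theta(r,r,t,\tfrac{1}{x})$. Your additional remarks on the simultaneous domain of validity of \eqref{T-H-EV} and \eqref{Tinv} are a reasonable (and slightly more careful) elaboration of the same argument.
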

	\begin{proof}
		Put $s=r$ in \eqref{T-H-EV} and substitute it in $\Theta(r,r,t,x)=x^{-t} \Theta(r,r,t,\tfrac{1}{x})$.
	\end{proof}
	\subsection{Kronecker limit type formula in the second variable}
	As an application of Theorem \ref{THM-EVAL}, we obtain the Kroncker limit formula for $\Theta(r,s,t,x)$ in the second argument $s$ in this subsection. This method is completely different from the one used in Section \ref{Sec2}.
	\begin{theorem}\label{KLF in s} The following hold:
	\begin{enumerate} \rm 
	\item \textit{For any  $r \in \mathbb{N},\ r \geq 2$,
		and for any $t \in \mathbb{N} \cup \{0 \}$, $s=1-t$ is a singular point of $\Theta(r,s,t,x)$, with the following polar singularity structure in $s$:}
		\begin{align} 
			&\Theta(r,s,t,x) = \frac{x^{-t}\zeta(r)}{(s-(1-t
				))} + \Bigg(  \gamma \zeta(r) x^{-t} + (-1)^r  \sum_{j=0}^{t-2} \binom{j+r-1}{j}  \frac{(-1)^{t-j}}{(t-j-1)!} \frac{1}{x^{j+r}} \sum_{m=1}^{\infty} \frac{\psi^{(t-j-1)}(mx+1)}{m^{r+j-t+1}}   \notag \\ & +  \sum_{i=1}^{r-2} \frac{(-1)^i}{x^{t+i}}  \binom{i+t-1}{i}  \zeta(i+1) \zeta(r-i) - \frac{(-1)^r}{x^{r+t-1}} \binom{r+t-2}{t-1}  \sum_{m=1}^{\infty} \frac{\gamma + \psi(mx+1)}{m^{r}} \Bigg) + O(|s-(1-t)|). \label{klfs1}
		\end{align}
	\item \textit{For $r=1$ and for any $t \in \mathbb{N}$, $s=1-t$ is a singular point of $\Theta(1,s,t,x)$, with the following polar singularity structure in $s$:}
		\begin{align}
			\Theta(1,s,t,x) &= \frac{x^{-t}}{(s-(1-t))^2} + \frac{x^{-t}}{(s-(1-t))} (\gamma+\log(x)+H_{t-1}) +\bigg( \frac{1}{x^{t}}(\gamma (\gamma+\log(x)+ H_{t-1}) + \gamma_1) \notag  \\
			& \left. -\sum_{j=0}^{t-2} \frac{(-1)^{t-j}x^{-j-1}}{(t-j-1)!}  \sum_{m=1}^{\infty} \frac{\psi^{(t-j-1)}(mx) + (-1)^{t-j}(t-j-2)!(mx)^{-(t-j-1)}}{m^{j-t+2}} \right. \notag \\
			& + \frac{t\zeta(2)}{x^{t+1}} +\frac{1}{x^t} \sum_{m=1}^{\infty} \frac{\psi(mx)-\log(mx)}{m} \bigg) + O(|s-(1-t)|).  \label{klfs2}
		\end{align}
	\end{enumerate}
	\end{theorem}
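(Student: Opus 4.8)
The plan is to read both expansions off the series evaluation \eqref{T-H-EV} directly, treating $s$ as the active variable and writing $u:=s-(1-t)$, so that $s+t=1+u$. The organising observation is that the entire $s$-dependence of \eqref{T-H-EV} is carried by the Riemann zeta factors $\zeta(s+t+i)$ and by the exponents $r+s+t-1$ and $r+s+j$ of the two $m$-series; consequently the only way a singularity can arise at $u=0$ is through a zeta value whose argument degenerates to $1$. The two parts then differ only in where that degeneration occurs.

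For part (1), with $r\ge 2$, I would first isolate the $i=0$ summand of the finite sum, namely $x^{-t}\zeta(r)\,\zeta(s+t)$: this is the unique singular term, since $\zeta(s+t+i)$ is analytic at $u=0$ for every $i\ge 1$ (its argument tends to $1+i\ge 2$). I would then verify that the two $m$-series in \eqref{T-H-EV} are holomorphic near $u=0$. At $s=1-t$ the summand of the digamma series is $(\gamma+\psi(mx+1))/m^{r}=O(\log m/m^{r})$, while that of the $\psi^{(t-j-1)}$ series is, by the asymptotic $\psi^{(k)}(z)=O(z^{-k})$, of size $O\bigl(m^{-(r+j-t+1)}m^{-(t-j-1)}\bigr)=O(m^{-r})$; since $r\ge 2$ both series converge absolutely and locally uniformly in $s$, so by the Weierstrass theorem they define holomorphic functions on a neighbourhood of $u=0$. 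It then only remains to substitute the Laurent expansion $\zeta(s+t)=u^{-1}+\gamma+O(|u|)$ into the $i=0$ term and to evaluate the analytic pieces at $u=0$; the residue $x^{-t}\zeta(r)$ and the accompanying constant $\gamma\,x^{-t}\zeta(r)$ then produce \eqref{klfs1}.

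For part (2), with $r=1$, the finite sum $\sum_{i=0}^{r-2}$ is empty, so the pole source of part (1) vanishes; instead, both $m$-series now \emph{diverge} at $u=0$ (their summands decay only like $1/m$ there), and the singular structure must be extracted from them by regularisation. The device is to subtract from each summand its leading large-$m$ asymptotics, which sums in closed form to an explicit Riemann zeta factor of $s$ carrying the pole, leaving a holomorphic remainder. For the digamma series, writing $\gamma+\psi(mx+1)=(\gamma+\log x)+\log m+(\psi(mx)-\log(mx))+\tfrac{1}{mx}$ and summing against $m^{-(s+t)}$ gives
\begin{align*}
\sum_{m=1}^{\infty}\frac{\gamma+\psi(mx+1)}{m^{s+t}}=(\gamma+\log x)\,\zeta(s+t)-\zeta'(s+t)+\frac{1}{x}\,\zeta(s+t+1)+\sum_{m=1}^{\infty}\frac{\psi(mx)-\log(mx)}{m^{s+t}},
\end{align*}
in which the last sum converges for $\Re(s+t)>0$ and is therefore holomorphic at $u=0$. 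Here the term $-\zeta'(s+t)=u^{-2}+\gamma_1+O(|u|)$ is exactly what produces the \emph{double} pole together with the constant $\gamma_1/x^{t}$, while $(\gamma+\log x)\zeta(s+t)$ contributes a simple pole. Each $\psi^{(t-j-1)}$ series I would treat the same way: first pass from argument $mx+1$ to $mx$ via $\psi^{(k)}(z+1)=\psi^{(k)}(z)+(-1)^{k}k!\,z^{-(k+1)}$, then split off the leading power $(-1)^{t-j}(t-j-2)!\,(mx)^{-(t-j-1)}$; the subtracted power sums against $m^{-(1+s+j)}$ to a multiple of $\zeta(s+t)$ (hence a further simple pole), the shift correction sums to a multiple of $\zeta(s+t+1)\to\zeta(2)$, and the remaining part is holomorphic at $u=0$.

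The main obstacle I anticipate is the bookkeeping that assembles the singular and constant parts in part (2). The simple-pole residues produced by the leading asymptotics of the $t-1$ series $\psi^{(t-j-1)}$, $0\le j\le t-2$, have the shape $\tfrac{1}{t-1-j}\,x^{-t}$, and summing them over $j$ gives, after reindexing, the harmonic number $H_{t-1}$, which combines with the residue $\gamma+\log x$ of the digamma series to reproduce the residue recorded in \eqref{klfs2}. Likewise the coefficient $t\,\zeta(2)/x^{t+1}$ must be recovered as the sum of the single $\tfrac1x\zeta(s+t+1)$ contribution and the $t-1$ shift-correction contributions. Finally the regularised remainder series, evaluated at $u=0$, together with the cross-terms obtained by multiplying each residue-carrying $\zeta(s+t)$ by the subleading Laurent data $\gamma,\gamma_1$, must be collated into the displayed constant term. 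This collation is routine term-by-term, but it is delicate in its signs and factorial factors, and it is where essentially all the labour of the theorem resides.
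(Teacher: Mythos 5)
Your proposal is correct and follows essentially the same route as the paper: part (1) is read off from the $i=0$ term $x^{-t}\zeta(r)\zeta(s+t)$ of Theorem \ref{THM-EVAL} after checking the $m$-series are holomorphic at $s=1-t$ for $r\ge 2$, and part (2) regularises the two divergent $m$-series exactly as in \eqref{simp1} and \eqref{simp2}, with your identified residue $H_{t-1}$ (from summing $1/(t-1-j)$), the double pole and $\gamma_1$ from $-\zeta'(s+t)$, and the coefficient $t\zeta(2)/x^{t+1}$ all matching the paper's computation. No gaps.
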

	\begin{proof}
	Let	$r \in \mathbb{N}, r \geq 2$,
		and $t \in \mathbb{N} \cup \{0 \}$. From \eqref{T-H-EV}, we have,
		\begin{align*}
			\Theta(r,s,t,x)- &\frac{1}{x^t} \zeta(s+t)\zeta(r) =  (-1)^r  \sum_{j=0}^{t-2} \binom{j+r-1}{j}  \frac{(-1)^{t-j}}{(t-j-1)!} \frac{1}{x^{j+r}} \sum_{m=1}^{\infty} \frac{\psi^{(t-j-1)}(mx+1)}{m^{r+s+j}}   \notag \\ & +  \sum_{i=1}^{r-2} \frac{(-1)^i}{x^{t+i}}  \binom{i+t-1}{i}  \zeta(s+t+i) \zeta(r-i) - \frac{(-1)^r}{x^{r+t-1}} \binom{r+t-2}{t-1}  \sum_{m=1}^{\infty} \frac{\gamma + \psi(mx+1)}{m^{r+s+t-1}}  . 
		\end{align*}
		Since we know, around $s=1-t$,
		\begin{align*}
			\zeta(s+t) = \frac{1}{s-(1-t)} +\gamma +O(|s-(1-t)|).
		\end{align*}
		Hence 
		\begin{align*}
			\lim_{s \to 1-t}	&\left( \Theta(r,s,t,x)- \frac{1}{x^t} \zeta(s+t)\zeta(r) \right) =  (-1)^r  \sum_{j=0}^{t-2} \binom{j+r-1}{j}  \frac{(-1)^{t-j}}{(t-j-1)!} \frac{1}{x^{j+r}} \sum_{m=1}^{\infty} \frac{\psi^{(t-j-1)}(mx+1)}{m^{r+j-t+1}}   \notag \\ & \hspace{2.5 cm} +  \sum_{i=1}^{r-2} \frac{(-1)^i}{x^{t+i}}  \binom{i+t-1}{i}  \zeta(i+1) \zeta(r-i) - \frac{(-1)^r}{x^{r+t-1}} \binom{r+t-2}{t-1}  \sum_{m=1}^{\infty} \frac{\gamma + \psi(mx+1)}{m^{r}}.
		\end{align*}
		Simplify to get, around $s=1-t$,
		\begin{align*} 
			&\Theta(r,s,t,x) = \frac{x^{-t}\zeta(r)}{(s-1+t)} + \Bigg(  \gamma \zeta(r) x^{-t} + (-1)^r  \sum_{j=0}^{t-2} \binom{j+r-1}{j}  \frac{(-1)^{t-j}}{(t-j-1)!} \frac{1}{x^{j+r}} \sum_{m=1}^{\infty} \frac{\psi^{(t-j-1)}(mx+1)}{m^{r+j-t+1}}   \notag \\ & +  \sum_{i=1}^{r-2} \frac{(-1)^i}{x^{t+i}}  \binom{i+t-1}{i}  \zeta(i+1) \zeta(r-i) - \frac{(-1)^r}{x^{r+t-1}} \binom{r+t-2}{t-1}  \sum_{m=1}^{\infty} \frac{\gamma + \psi(mx+1)}{m^{r}} \Bigg) + O(|s-1+t|).
		\end{align*}
		For $t-j-1\geq1$, $\psi^{(t-j-1)}(y)=O(y^{-(t-j-1)})$ as $y \to \infty$. Hence, we can see that, as $x \to \infty$, $$\frac{\psi^{(t-j-1)}(mx+1)}{m^{r+j-t+1}} = O\left(\frac{1}{m^r}\right).$$
		Since $r \geq 2$, the series involving the derivatives of $\psi(x)$ is absolutely convergent. This is the required result \eqref{klfs1}.
		
		 Let $r=1$ and for any $t \in \mathbb{N}$. From Theorem \ref{THM-EVAL} to get,
		\begin{align}
			\Theta(1,s,t,x) &= -  \sum_{j=0}^{t-2} \frac{(-1)^{t-j}}{(t-j-1)!} \frac{1}{x^{j+1}} \sum_{m=1}^{\infty} \frac{\psi^{(t-j-1)}(mx+1)}{m^{s+j+1}}  + \frac{1}{x^{t}}  \sum_{m=1}^{\infty} \frac{\gamma + \psi(mx+1)}{m^{s+t}}. \label{r=1 eval}
		\end{align}
		We simplify the terms on the right-hand side of \eqref{r=1 eval} as follows:
		\begin{align}
			\sum_{m=1}^{\infty} \frac{\gamma + \psi(mx+1)}{m^{s+t}} &= \gamma \zeta(s+t) + \sum_{m=1}^{\infty} \frac{\psi(mx+1)}{m^{s+t}} \notag \\&= \gamma \zeta(s+t) + \frac{1}{x}\zeta(s+t+1) + \sum_{m=1}^{\infty} \frac{\psi(mx)}{m^{s+t}}  \notag \\
			&= \gamma \zeta(s+t) + \frac{1}{x}\zeta(s+t+1) + \sum_{m=1}^{\infty} \frac{\psi(mx)-\log(mx)}{m^{s+t}} + \sum_{m=1}^{\infty} \frac{\log(mx)}{m^{s+t}} \notag \\
			&= (\gamma+\log(x)) \zeta(s+t) + \frac{1}{x}\zeta(s+t+1) -\zeta'(s+t) + \sum_{m=1}^{\infty} \frac{\psi(mx)-\log(mx)}{m^{s+t}}. \label{simp1}
		\end{align} 
		Simplification of the other term is as follows: Since $t-j-1\geq1$, we have $\psi^{(t-j-1)}(y+1)=\psi^{(t-j-1)}(y)-(-1)^{t-j}(t-j-1)!y^{j-t}$. Using it we can see,
		\begin{align} 
			\sum_{m=1}^{\infty} \frac{\psi^{(t-j-1)}(mx+1)}{m^{s+j+1}}	&=- \frac{(-1)^{t-j}(t-j-1)!}{x^{t-j}} \zeta(s+t+1)+ \sum_{m=1}^{\infty} \frac{\psi^{(t-j-1)}(mx)}{m^{s+j+1}} \notag \\
			&= - \frac{(-1)^{t-j}(t-j-1)!}{x^{t-j}} \zeta(s+t+1)- \frac{(-1)^{t-j}(t-j-2)!}{x^{t-j-1}} \zeta(s+t) \notag \\
			&\quad + \sum_{m=1}^{\infty} \frac{\psi^{(t-j-1)}(mx) + (-1)^{t-j}(t-j-2)!(mx)^{-(t-j-1)}}{m^{s+j+1}}. \label{simp2}
		\end{align}
		Substitute \eqref{simp1} and \eqref{simp2} in \eqref{r=1 eval} to get,
		\begin{align*}
			&\Theta(1,s,t,x) - \frac{1}{x^t} \Bigg(\gamma+\log(x)+  \sum_{j=0}^{t-2} \frac{1}{(t-j-1)} \Bigg) \zeta(s+t) + \frac{1}{x^t} \zeta'(s+t)  \notag \\
			&= \frac{t}{x^{t+1}}\zeta(s+t+1) +\frac{1}{x^t} \sum_{m=1}^{\infty} \frac{\psi(mx)-\log(mx)}{m^{s+t}} \notag \\
			&\quad -  \sum_{j=0}^{t-2} \frac{(-1)^{t-j}}{(t-j-1)!} \frac{1}{x^{j+1}} \sum_{m=1}^{\infty} \frac{\psi^{(t-j-1)}(mx) + (-1)^{t-j}(t-j-2)!(mx)^{-(t-j-1)}}{m^{s+j+1}}.
		\end{align*}
		Taking limit as $s \to 1-t$, we get,
		\begin{align*}
			&\lim_{s \to 1-t} \left( \Theta(1,s,t,x) - \frac{1}{x^t} \left(\gamma+\log(x)+ H_{t-1} \right) \zeta(s+t) + \frac{1}{x^t} \zeta'(s+t) \right) \\
			&= \frac{t}{x^{t+1}}\zeta(2) +\frac{1}{x^t} \sum_{m=1}^{\infty} \frac{\psi(mx)-\log(mx)}{m} \notag \\
			&\quad - \sum_{j=0}^{t-2} \frac{(-1)^{t-j}}{(t-j-1)!} \frac{1}{x^{j+1}} \sum_{m=1}^{\infty} \frac{\psi^{(t-j-1)}(mx) + (-1)^{t-j}(t-j-2)!(mx)^{-(t-j-1)}}{m^{j-t+2}}.
		\end{align*}
		where $H_n$ is the $n^{th}$ Harmonic number, with the convention $H_0=0$. Also, see that, as $y \to \infty$,
		\begin{align*}
			\psi^{(t-j-1)}(y) + (-1)^{t-j}(t-j-2)! \frac{1}{y^{t-j-1}} = O(y^{-(t-j)}),
		\end{align*}
		which ensures the convergence of the series involving the derivatives of $\psi(x)$. Use the Laurent expansions of $\zeta(s+t)$ and $\zeta'(s+t)$ around $s=1-t$ to get the required result \eqref{klfs2}.
	\end{proof}
	\begin{corollary}\label{s cor}
		Around $s=0$, we have,
		\begin{align*}
			\Theta(1,s,1,x) = \frac{1}{xs^2} +\frac{\gamma+\log(x)}{xs} +\frac{1}{x}(\gamma(\gamma+\log(x))+\gamma_1) +\frac{\zeta(2)}{x^{2}}  +\frac{1}{x} \sum_{m=1}^{\infty} \frac{\psi(mx)-\log(mx)}{m}+O(s).
		\end{align*}
	\end{corollary}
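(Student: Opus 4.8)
The plan is to read off this expansion directly from the Kronecker limit type formula \eqref{klfs2} in part (2) of Theorem \ref{KLF in s} by specializing to $t=1$. Since that formula is valid for $r=1$ and every $t \in \mathbb{N}$, the case $t=1$ is admissible, and the singular point $s=1-t$ becomes precisely $s=0$, matching the location of the claimed Laurent expansion.

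First I would substitute $t=1$ into each term of \eqref{klfs2}. The prefactor $x^{-t}$ becomes $1/x$, so the double-pole term is $1/(xs^2)$. The harmonic number $H_{t-1}=H_0$ vanishes under the convention $H_0=0$ used in the proof of Theorem \ref{KLF in s}, so the simple-pole residue collapses to $(\gamma+\log(x))/(xs)$ and the first constant term to $\frac{1}{x}\bigl(\gamma(\gamma+\log(x))+\gamma_1\bigr)$. The term $t\zeta(2)/x^{t+1}$ becomes $\zeta(2)/x^2$, and the Herglotz-type series $\frac{1}{x^t}\sum_{m=1}^{\infty}\frac{\psi(mx)-\log(mx)}{m}$ becomes $\frac{1}{x}\sum_{m=1}^{\infty}\frac{\psi(mx)-\log(mx)}{m}$.

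The one bookkeeping point to verify is the finite sum $\sum_{j=0}^{t-2}$ appearing in \eqref{klfs2}, which houses all the higher-derivative digamma contributions. When $t=1$ its upper index $t-2=-1$ falls below the lower index $0$, so this is an empty sum and contributes nothing; this is exactly why no $\psi^{(t-j-1)}$ terms appear in the corollary. Collecting the surviving terms reproduces the stated expansion, with the error $O(|s-(1-t)|)$ becoming $O(s)$. As every step is a direct substitution into an already-established formula, I do not expect any genuine obstacle; the only care required is tracking the empty sum and the $H_0=0$ convention.
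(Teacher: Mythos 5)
Your proposal is correct and is essentially the paper's own argument: the paper likewise specializes Theorem \ref{KLF in s} to $r=t=1$ (quoting the intermediate limit identity with $\zeta(s+1)$ and $\zeta'(s+1)$ before expanding), which amounts to exactly your direct substitution $t=1$ into \eqref{klfs2}. Your bookkeeping of the empty sum $\sum_{j=0}^{-1}$ and the convention $H_0=0$ is the only nontrivial point, and you handle it correctly.
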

	\begin{proof}
		Put $r=t=1$ in Theorem \ref{KLF in s} and tend $s \to 0$ to get,
		\begin{align*}
			\lim_{s \to 0} \left( \Theta(1,s,1,x) - \frac{1}{x} \left(\gamma+\log(x) \right) \zeta(s+1) + \frac{1}{x} \zeta'(s+1) \right) = \frac{\zeta(2)}{x^{2}}  +\frac{1}{x} \sum_{m=1}^{\infty} \frac{\psi(mx)-\log(mx)}{m}.
		\end{align*}
		Use the Laurent series expansions of $\zeta(s+1)$ and $\zeta'(s+1)$ around $s=0$ to get the result. 
	\end{proof}
	\begin{remark}
		From \cite[Equation 7.12]{zagier},
		\begin{align*}
			\sum_{m=1}^{\infty} \frac{ \psi(m) -\log(m)}{m} = -\frac{\gamma^2}{2} -\frac{\pi^2}{12}  -\gamma_1,
		\end{align*}
	    and putting $x=1$ in Corollary \ref{s cor},  we can see that around $s=0$,
		\begin{align}
			\Theta(1,s,1,1) = \frac{1}{s^2} + \frac{\gamma}{s} + \frac{6\gamma^2+\pi^2}{12}    +O(|s|). \label{REMARK2}
		\end{align}
	\end{remark}
	\section{Special values of the function $\Theta(s,s,s,x)$}\label{RomikSection}
 	In this section, we will study the special values at points of indeterminacy and the singularities of $\Theta(s,s,s,x)$, in the variable $s$. Recall that $	\Theta(s_0, s_0, s_0, x)_{L_1} := \lim_{s \to s_0} \Theta(s,s,s,x)$. We have the following theorem which is a generalization of the evaluations in \eqref{sv omega1} and \eqref{sv omega2}.
	
	\begin{theorem}\label{SV} \rm 
		\textit{Let $j$ be a non-negative integer. We have the following special values for $\Theta(s,s,s,x)$: }
		\begin{enumerate}
			\item \textit{For $j$ odd,}
			\begin{align*}
			\Theta(-j,-j,-j,x)_{L_1} &=0. 
			\end{align*}
			\item \textit{For $j$ even,}
			\begin{align}
			\Theta(-j,-j,-j,x)_{L_1} &= - \frac{   (j!)^2}{ 2 (2j+1)!}(x^{2j+1}+x^{-j-1})\zeta(-3j-1)     + \sum_{\substack{k=0 }}^{j}  \binom{j}{k} x^{j-k} \zeta(-j-k)\zeta(-2j+k). \label{svT2}
			\end{align} 
		\end{enumerate}

	\end{theorem}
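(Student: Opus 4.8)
The plan is to evaluate the diagonal limit directly from the analytically continued integral representation of Proposition \ref{first}, rather than from the third-variable formulas of Theorems \ref{1stpolar}--\ref{3rdpolar} (which compute the genuinely different $L_2$-limit). Since $-j \notin \mathbb{N}$, I would specialize \eqref{PE} to the diagonal $r=s=t$, writing the common variable again as $s$, and choose the truncation parameter $M$ large (any $M \ge 2j+1$ suffices). With this choice the bracketed Taylor remainder of $\mathrm{Li}_s(e^{-y})$ is $O(y^{M+1})$ by \eqref{BOr}, while $\mathrm{Li}_s(e^{-xy}) = O(y^{\Re(s)-1})$ by \eqref{BOs}; hence the integrand is $O(y^{2\Re(s)+M-1})$ near $0$ and decays exponentially at $\infty$, so a Temme-type argument (\cite[Theorem 2.2]{temme}, exactly as in the proofs above) shows the resulting integral $I(s)$ is holomorphic in a neighbourhood of $s=-j$. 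The key structural point is that the prefactor $1/\Gamma(s)$ multiplying $I(s)$ has a simple zero at $s=-j$, so the entire integral contribution tends to $0$ in the limit, leaving only the explicit arithmetic terms.

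After this reduction I would be left with
\[
\Theta(-j,-j,-j,x)_{L_1} = \lim_{s\to-j}\left[ \frac{\Gamma(1-s)\Gamma(2s-1)}{x^{2s-1}\Gamma(s)}\zeta(3s-1) + \sum_{k=0}^{M} \frac{(-1)^k \zeta(s-k)\Gamma(s+k)}{x^{s+k}\,k!\,\Gamma(s)}\zeta(2s+k)\right].
\]
Since $1/\Gamma(s)\to 0$, only those summands whose numerator carries a compensating simple pole at $s=-j$ contribute. I would identify exactly three pole sources, all simple and mutually non-overlapping (because $2j+1>j$): the factor $\Gamma(2s-1)$ in the first term; the factor $\Gamma(s+k)$ for $0\le k\le j$; and the factor $\zeta(2s+k)$ precisely at $k=2j+1$. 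Every other summand vanishes in the limit.

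The remaining work is residue bookkeeping, which I expect to be routine using $1/\Gamma(s)\sim(-1)^j j!\,(s+j)$, the identity $\Gamma(s+k)/\Gamma(s)=s(s+1)\cdots(s+k-1)\to(-1)^k j!/(j-k)!$ for $0\le k\le j$, the residue of $\Gamma$ at $-(2j+1)$, and the residue of $\zeta$ at $1$. These give
\[
\frac{\Gamma(1-s)\Gamma(2s-1)}{x^{2s-1}\Gamma(s)}\zeta(3s-1)\longrightarrow \frac{(-1)^{j+1}(j!)^2}{2(2j+1)!}\,x^{2j+1}\zeta(-3j-1),
\]
\[
\frac{(-1)^{2j+1}\zeta(s-2j-1)\Gamma(s+2j+1)}{x^{s+2j+1}(2j+1)!\,\Gamma(s)}\zeta(2s+2j+1)\longrightarrow \frac{(-1)^{j+1}(j!)^2}{2(2j+1)!}\,x^{-j-1}\zeta(-3j-1),
\]
while the block $0\le k\le j$ converges to $\sum_{k=0}^{j}\binom{j}{k}x^{j-k}\zeta(-j-k)\zeta(-2j+k)$. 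Summing yields
\[
\Theta(-j,-j,-j,x)_{L_1} = \frac{(-1)^{j+1}(j!)^2}{2(2j+1)!}\bigl(x^{2j+1}+x^{-j-1}\bigr)\zeta(-3j-1) + \sum_{k=0}^{j}\binom{j}{k}x^{j-k}\zeta(-j-k)\zeta(-2j+k),
\]
which is exactly \eqref{svT2} once $(-1)^{j+1}=-1$ for even $j$.

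Finally, for odd $j$ I would deduce vanishing from the trivial zeros of $\zeta$: here $3j+1$ is even so $\zeta(-3j-1)=0$, killing the closed-form prefactor, and in the sum every term dies for parity reasons — when $k$ is odd the argument $-j-k$ is a negative even integer so $\zeta(-j-k)=0$, and when $k$ is even the argument $-2j+k\le -j-1$ is a negative even integer so $\zeta(-2j+k)=0$. The main obstacle I anticipate is not the algebra but the analytic justification of the first paragraph: making the vanishing of the integral along the diagonal rigorous requires choosing $M$ correctly and invoking the Temme continuation uniformly near $s=-j$, together with a careful verification that the three pole sources never coincide, so that all surviving contributions are genuinely simple poles and no double-pole or $\zeta'$ terms appear (in contrast with the $s=\ell+1$ cases of Theorems \ref{1stpolar}--\ref{2ndpolar}).
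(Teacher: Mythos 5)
Your proposal is correct and follows essentially the same route as the paper's own proof: specialize Proposition \ref{first} to the diagonal $r=s=t$, use the zero of $1/\Gamma(s)$ at $s=-j$ (with $M\geq 2j+1$ and the Temme-type bound) to kill the integral, and collect the three simple-pole contributions from $\Gamma(2s-1)$, $\Gamma(s+k)$ for $0\le k\le j$, and $\zeta(2s+k)$ at $k=2j+1$, with the odd case following from the trivial zeros of $\zeta$. The only cosmetic differences are that the paper fixes $M=2j+1$ and evaluates the first term via the duplication formula rather than a direct residue computation.
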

\begin{proof}
Taking $r=t=s$ in Proposition \ref{first}, for $s> \frac{2}{3}$,  $M \in \mathbb{N} \cup \{0\} $ and $x>0$, we have  
\begin{align}\label{RomikInt}
	&\Theta(s,s,s,x) -\frac{1}{\Gamma(s)} \int_{0}^{\infty} y^{s-1} \textup{Li}_s (e^{-xy}) \Bigg( \textup{ Li}_s (e^{-y}) - \Gamma(1-s) y^{s-1} - \sum_{\substack{k=0 }}^{M} (-1)^k\zeta(s-k) \frac{y^k}{k!}\Bigg) \, dy \notag \\
	&=   \frac{\Gamma(1-s)\Gamma(2s-1)}{ x^{2s-1}\Gamma(s)}\zeta(3s-1)    + \sum_{\substack{k=0 }}^{M} \frac{ (-1)^k}{k!} \frac{\zeta(s-k)\Gamma(s+k)}{ x^{s+k}\Gamma(s)} \zeta(2s+k).
\end{align} 
For $\Re(s) \ge \frac{-M}{2}+\epsilon, \epsilon>0$,
\begin{align}\label{romikbound}
	y^{s-1} \textup{Li}_s (e^{-xy}) \Bigg( \textup{ Li}_s (e^{-y}) - \Gamma(1-s) y^{s-1} - \sum_{\substack{k=0 }}^{M} (-1)^k\zeta(s-k) \frac{y^k}{k!}\Bigg)= O(y^{2 \epsilon-1}).
\end{align}
Therefore the integral is convergent for $\Re(s) \ge \frac{-M}{2}+\epsilon$ and hence analytic.
For any fixed non-negative integer $j$, choose $M=2j+1$, and see that the term involving the integral in \eqref{RomikInt} vanishes as $s\to -j$. Moreover, the only terms which survive in the finite sum are when $0\le k \le j $ and $k=2j+1$. Therefore, using the duplication formula for $\Gamma(s)$ in the first term of right-hand side of \eqref{RomikInt}, we have 
\begin{align*}
	\Theta(-j,-j,-j,x)_{L_1} = &  \frac{ x^{2j+1} \Gamma(-j-\frac{1}{2}) \Gamma(j+1)}{\sqrt{\pi} 2^{2j+2}}\zeta(-3j-1)  \\&  + \sum_{\substack{k=0 }}^{j} \frac{ (-1)^k}{k!} (-j) (-j+1)\cdots(-j+k-1) x^{j-k} \zeta(-j-k) \zeta(-2j+k)\\
	& +\frac{ (-1)^{2j+1}}{(2j+1)!} \frac{\zeta(-3j-1)\Gamma(j+1)}{ x^{j+1}} \frac{(-1)^j j!}{2}\\
	= &  \frac{ (-1)^{j+1}x^{2j+1}  (j!)^2}{ 2 (2j+1)!}\zeta(-3j-1)  + \frac{ (-1)^{j+1}(j!)^2}{2 (2j+1)! x^{j+1}} \zeta(-3j-1) \\&  + \sum_{\substack{k=0 }}^{j}  \binom{j}{k} x^{j-k} \zeta(-j-k)\zeta(-2j+k) .
\end{align*}
For odd values of $j$, the right--hand side of above expression equals zero, due to trivial zeros of $\zeta(s)$ at negative even integers. For $j$ even, we get \eqref{svT2} upon simplification. 
\end{proof}
\begin{remark}\label{RMK-jzero}
	\rm
	For $j>0$ even, we can use the following recursive identity \cite[p.132]{romik}
	\begin{align*}
		\frac{j!}{(2j+1)!}\zeta(-3j-1) = \sum_{i=0}^{j} \frac{1}{i! (j-i)!} \zeta(-i-j)\zeta(i-2j),
	\end{align*}
	to see that $\Theta(-j,-j,-j,1)_{L_1}=0$, i.e., \eqref{sv omega2} holds for any positive integer $j$.
\end{remark}
\begin{theorem}\label{TTheorem}
	\rm
	\textit{The function $\Theta(s,s,s,x)$ has singularities exactly at $s=\frac{2}{3}$ and $s=\frac{1}{2}-j$, for any $j \in \mathbb{N} \cup\{0\} $. Moreover,} 
	\begin{enumerate}
		\item \textit{around $s=\frac{2}{3}$, we have}
		\begin{align}
			\Theta(s,s,s,x)=\frac{\Gamma^3 \hspace{-0.07 cm}\left(\frac{1}{3}\right)}{2\sqrt{3}\pi x^\frac{1}{3} \left(s-\frac{2}{3}\right)}+O(1). \label{T1}
		\end{align} 
		\item \textit{around $s=\frac{1}{2}-j$, we have}
		\begin{align}
			\Theta(s,s,s,x)= (-1)^j 2^{-4j-1}  \binom{2j}{j} \left(x^{2j}+x^{-\frac{1}{2}- j}\right)   \frac{\zeta\left(\frac{1-6j}{2} \right)}{
				\left(s - \left( \frac{1}{2} - j \right)\right)} +O(1). \label{T2}
		\end{align}
	\end{enumerate}
\end{theorem}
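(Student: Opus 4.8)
The plan is to read off the singularities of $\Theta(s,s,s,x)$ directly from the explicit meromorphic terms already isolated in the proof of Theorem \ref{SV}. Setting $r=t=s$ in Proposition \ref{first} yields the identity \eqref{RomikInt}, and the bound \eqref{romikbound} shows that for any fixed $M$ the integral on its left-hand side is holomorphic in the half-plane $\Re(s)>-M/2$. Since $\Theta(s,s,s,x)$ is itself holomorphic for $\Re(s)>2/3$ (the defining series converges absolutely there), it suffices to take $M$ as large as desired and to locate the poles of the explicit remainder
\begin{align*}
R_M(s):=\frac{\Gamma(1-s)\Gamma(2s-1)}{x^{2s-1}\Gamma(s)}\zeta(3s-1)+\sum_{k=0}^{M}\frac{(-1)^k}{k!}\,\frac{\zeta(s-k)\Gamma(s+k)}{x^{s+k}\Gamma(s)}\,\zeta(2s+k),
\end{align*}
because on $\Re(s)>-M/2$ the singularities of $\Theta(s,s,s,x)$ are, by analytic continuation of \eqref{RomikInt}, exactly those of $R_M(s)$.

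Next I would catalogue the candidate poles of $R_M(s)$. In the first term, $\zeta(3s-1)$ has a simple pole at $s=2/3$, $\Gamma(2s-1)$ has simple poles at $s=\tfrac{1-n}{2}$ for $n\ge 0$, $\Gamma(1-s)$ has poles only at positive integers, and $1/\Gamma(s)$ vanishes at the non-positive integers; in the sum, each ratio $\Gamma(s+k)/\Gamma(s)=s(s+1)\cdots(s+k-1)$ is a polynomial, so the only singular factors are $\zeta(2s+k)$, with a simple pole at $s=\tfrac{1-k}{2}$. The crucial point for the \emph{exactness} claim is that the potential poles at the non-positive integers all cancel: at $s=-j$ the pole of $\Gamma(2s-1)$ is killed by the zero of $1/\Gamma(s)$ in the first term, while in the sum the only singular contribution comes from $k=2j+1$, where the simple pole of $\zeta(2s+2j+1)=\zeta(1+2(s+j))$ is annihilated by the factor $(s+j)$ of the polynomial $\Gamma(s+k)/\Gamma(s)$. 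Hence $R_M$ is regular at every non-positive integer, and together with the holomorphy for $\Re(s)>2/3$ this pins the singularities down to $s=2/3$ and the half-integers $s=\tfrac12-j$, $j\ge0$ (coming from the even offsets $n=2j$ and $k=2j$).

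For the residues I would proceed term by term. At $s=2/3$ only $\zeta(3s-1)$ is singular, with residue $1/3$, so the pole is simple; substituting $\Gamma(1-s)=\Gamma(2s-1)=\Gamma(1/3)$, $\Gamma(s)=\Gamma(2/3)$ and applying the reflection formula $\Gamma(1/3)\Gamma(2/3)=2\pi/\sqrt3$ collapses the residue to $\Gamma^3(1/3)/(2\sqrt3\,\pi x^{1/3})$, which is \eqref{T1}. At $s=\tfrac12-j$ there are exactly two simple-pole contributions at the same point: one from $\Gamma(2s-1)$ in the first term (residue $\tfrac{1}{2(2j)!}$ after the change $w=2s-1$) and one from $\zeta(2s+2j)$ in the $k=2j$ term of the sum (residue $1/2$). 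Evaluating the remaining finite factors produces the powers $x^{2j}$ and $x^{-1/2-j}$ respectively, each weighted by $\zeta\!\left(\tfrac{1-6j}{2}\right)$ and the gamma ratio $\Gamma(\tfrac12+j)/\Gamma(\tfrac12-j)$; using the reflection and duplication formulas this ratio simplifies to $(-1)^j (2j)!^2/(16^j (j!)^2)$, and adding the two residues gives $(-1)^j2^{-4j-1}\binom{2j}{j}\big(x^{2j}+x^{-1/2-j}\big)\zeta\!\left(\tfrac{1-6j}{2}\right)$, i.e.\ \eqref{T2}.

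I expect the main obstacle to be the exactness assertion rather than the residues: one must verify carefully that the two infinite families of apparent poles of $R_M$ at the non-positive integers cancel completely (the simultaneous use of the zero of $1/\Gamma(s)$ in the first term and the vanishing of the polynomial $\Gamma(s+k)/\Gamma(s)$ in the sum), and that at each half-integer the two surviving singular terms are both \emph{simple} poles located at the same point, so that they add to a simple pole and do not conspire into a spurious double pole. Once this bookkeeping is in place, the gamma-ratio simplifications via the reflection and duplication formulas are routine.
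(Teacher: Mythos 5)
Your proposal is correct and follows essentially the same route as the paper: specialize Proposition \ref{first} to $r=t=s$ to obtain \eqref{RomikInt}, use the bound \eqref{romikbound} to make the integral term holomorphic in an arbitrarily large half-plane, read the poles and residues off the explicit remainder (the $\zeta(3s-1)$ pole at $s=2/3$, and the two simple contributions from $\Gamma(2s-1)$ and from $\zeta(2s+2j)$ in the $k=2j$ term at $s=\tfrac12-j$), and finish with the reflection and duplication formulas. The only cosmetic difference is that the paper rules out singularities at the non-positive integers by citing Theorem \ref{SV}, whereas you re-verify the same cancellations (the zero of $1/\Gamma(s)$ against the pole of $\Gamma(2s-1)$, and the factor $s+j$ of $\Gamma(s+k)/\Gamma(s)$ against the pole of $\zeta(2s+k)$ at $k=2j+1$) directly inside the remainder.
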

\begin{proof}
	From \eqref{pole conditions}, we know that the function $\Theta(s,s,s,x)$ has its singularities only possible at $s=\frac{2}{3}$ or $s=\frac{1-\ell}{2}$, for any non-negative integer $\ell$. At $s=\frac{2}{3}$, the finite sum on the right-hand side of \eqref{RomikInt} has no singularity. Also, note that taking $M=0$ in \eqref{RomikInt} suffices. Therefore the only pole arising in the right-hand side of \eqref{RomikInt} is due to $\zeta(3s-1)$. After using the Laurent series expansion of the other elementary functions, around $s =\frac{2}{3}$, we have,
\begin{align*}
	\Theta(s,s,s,x)=\frac{\Gamma^2\hspace{-0.07 cm}\left(\frac{1}{3}\right)}{3 x^\frac{1}{3}\Gamma\hspace{-0.07 cm}\left(\frac{2}{3}\right) \left(s-\frac{2}{3}\right)}+O(1).
\end{align*} 
 Using the reflection formula for $\Gamma(s)$, one can see that $\Gamma(\frac{1}{3})\Gamma(\frac{2}{3})= \frac{2\pi}{\sqrt{3}}$, and hence get \eqref{T1}.
 
 If $\ell=2j-1$, for some $j \in \mathbb{N}$, then $\frac{1-\ell}{2}$ will be a non-positive integer. Hence, from Theorem \ref{SV}, clearly, $\Theta(s,s,s,x)$ has no singularity at $s=\frac{1-\ell}{2}$. Therefore we assume $\ell= 2 j$, for some $j \in \mathbb{N} \cup \{0\}$. Taking $M=2j$, the integral in \eqref{RomikInt} converges in $\Re(s) \ge j+\epsilon, \epsilon>0$ as shown in \eqref{romikbound}. At $s=\frac{1}{2}-j$, note that $\Gamma(2s-1)$ is the only polar factor in the first term of the right hand side of \eqref{RomikInt}. Hence, around $s=\frac{1}{2}-j$
 \begin{align}\label{rom1}
  \frac{\Gamma(1-s)\Gamma(2s-1)}{ x^{2s-1}\Gamma(s)}\zeta(3s-1)  = \frac{x^{2 j} \Gamma\left(\frac{1}{2}+j\right) \zeta\left(\frac{1-6j}{2} \right)}{2 (2 j)! \Gamma\left(\frac{1}{2}-j\right)
  	\left(s - \frac{1}{2} + j\right)} +O(1).\end{align}
  In the finite sum \eqref{RomikInt}, the only polar term is when $k=2j$, in which case the zeta factor $\zeta(s+2j)$ has pole at $s=\frac{1}{2}-j$. Therefore, writing the Laurent series expansion, we have
  \begin{align}\label{rom2}
  	\sum_{\substack{k=0 }}^{M} \frac{ (-1)^k}{k!} \frac{\zeta(s-k)\Gamma(s+k)}{ x^{s+k}\Gamma(s)} \zeta(2s+k)=  \frac{x^{-\frac{1}{2}- j} \Gamma\left(\frac{1}{2}+j\right) \zeta\left(\frac{1-6j}{2} \right)}{2 (2j)! \Gamma\left(\frac{1}{2}-j\right)
  		\left(s - \frac{1}{2} + j\right)} +	O(1).
  \end{align}
  Finally, from \eqref{RomikInt},\eqref{rom1} and \eqref{rom2}, we get
  \begin{align*}
  	\Theta(s,s,s,x)= \frac{(x^{2j}+x^{-\frac{1}{2}- j}) \Gamma\left(\frac{1}{2}+j\right) \zeta\left(\frac{1-6j}{2} \right)}{2 (2j)! \Gamma\left(\frac{1}{2}-j\right)
  		\left(s - \frac{1}{2} + j\right)} +O(1),
  \end{align*}
  around $s=\frac{1}{2}-j$. Using the reflection formula and the duplication formula for $\Gamma(s)$, one can see that
  \begin{align*}
  	\frac{ \Gamma\left(\frac{1}{2}+j\right)}{(2j)! \Gamma\left(\frac{1}{2}-j\right)} = (-1)^j 2^{-4j} \binom{2j}{j},
  \end{align*}
  which gives us \eqref{T2}.
  \end{proof}
\section{Concluding remarks}
\noindent We end the paper with following remarks and problems:
\begin{enumerate}
	 \item In this paper, we have obtained the Kronecker limit formula for the generalized Mordell-Tornheim zeta function $\Theta(r,s,t,x)$ in the second and third variables with other parameters fixed. However, it would be interesting to study the behavior of $\Theta(r,s,t,x)$ around points of indeterminacy, as a function of three complex variables with various limiting approaches. One should find special values of $\Theta(r,s,t,x)$ at points $r=s=t=-j$, for any $j \in \mathbb{N}$, with a limiting sense different from the one we take in \eqref{sense}. For instance, one can refer to \cite[Example 1]{Komori} for a detailed example.
	 \item Theorem \ref{TTheorem} gives only the principal part of the Laurent series expansion of $\Theta(s,s,s,x)$ around its singularities. However, it is desirable to further obtain the constant term of the Laurent expansion explicitly. 
	 \item  For a fixed $x$, $\Theta(s,s,s,x)$ is a function of one variable complex $s$, with an absolutely convergent double series representation for $\Re(s)>\frac{2}{3}$, which can meromorphically extended to the whole complex plane. As asked in \cite[Section 5]{zhao}, it will be interesting to determine if $\Theta(s,s,s,x)$ satisfies a functional equation analogous to that of the $\zeta(s)$.
	 
	 In Theorem \ref{SV}, we have shown that $\Theta(s,s,s,x)$ has a zero at negative odd integers  $s$. However, finding other non-trivial zeros of this function is an interesting open problem. A variant of this question was also asked in \cite[Section 5]{zhao}.
	\item The study of the Mordell-Tornheim zeta function $\zeta_{\textup{MT}}(r,s,t)$ with an additional parameter $x$ gives us the two-term functional equations for Herglotz-Zagier function as shown in \cite[Theorem 1.3]{dss2}. It also gives new functional equations \eqref{mixed}. Hence, this warrants an independent study on the general Witten-zeta function $\zeta_\Phi(s)$ \cite{KCA}, associated to a root system $\Phi$, with a parameter $x$, leading to new interesting connections.

\end{enumerate}
\section{Acknowledgment}
	The authors would like to extend gratitude to Atul Dixit for his insights and continuous support. The first author was partially supported by the grant MIS/IITGN/R\&D/MATH/AD/2324/058 of IIT Gandhinagar, and the Seed money grant CUK/ACAD-II/F-3787/26 of Central University of Karnataka. The second author thanks IIT Gandhinagar for partially funding the research. \\

\noindent \textbf{Data Availability Statement:} The manuscript has no associated data with it.


\begin{thebibliography}{00}
		
	
	\bibitem{AET}
	S.~Akiyama, S.~Egami, Y.~Tanigawa, \textit{Analytic continuation of multiple zeta-functions and their values at non-positive integers}, Acta Arith. \textbf{98} (2001) 107--116.
	
	\bibitem{arakawa-kaneko}
	T.~Arakawa and M.~Kaneko, \emph{Multiple zeta values, poly-Bernoulli numbers, and related zeta functions}, Nagoya Math.~J.~\textbf{153} (1999), 189--209.
	
	\bibitem{KCA}
	K.~C.~Au, \textit{On single-variable Witten zeta functions of rank 2 and 3}, preprint, arXiv:2412.17196v2 (2025). 
	
	
	\bibitem{boas}
	H.~P.~Boas, \textit{Lecture notes on Several Complex Variables}, available online on \url{https://haroldpboas.gitlab.io/courses/650-2013c/notes.pdf} (2013).
	
	\bibitem{coffey}
	M.~W.~Coffey, \emph{Series representations for the Stieltjes constants}, Rocky Mountain J.~Math.~\textbf{44} No. 2 (2014), 443--477.
	
	
	
	\bibitem{dss2}
	A. Dixit, S. Sathyanarayana and N. G. Sharan, \emph{Mordell-Tornheim zeta functions and functional equations for Herglotz-Zagier type functions}, Adv. Math.~\textbf{473} (2025), 110303 (66 pages). 
	
	\bibitem{Erdelyi}
	A.~Erdelyi, W.~Magnus, F.~Oberhettinger and F.~G.~Tricomi, \emph{Higher transcendental functions} Vol. I, Based, in part, on notes left by Harry Bateman. McGraw-Hill Book Co., Inc., New York-Toronto-London, 1953, 302 pp.
	
	\bibitem{partialfraction}
	H.~Gangl, M.~Kaneko and D.~Zagier, \emph{Double zeta values and modular forms, automorphic forms and zeta functions}, Proceedings of the conference in memory of Tsuneo Arakawa, World Scientific, Hackensack (2006),  71--106.
	
	\bibitem{Gauthier} P.~M.~Gauthier, \textit{Lectures on Several Complex Variables}, Springer, Cham, p.110, 2014.
	
	\bibitem{apg3}
	A.~P.~Guinand, \emph{A note on the logarithmic derivative of the Gamma function}, Edinburgh Math. Notes \textbf{38} (1952), 1--4.
	
	\bibitem{guoxie}
	L.~Guo and B.~Xie, \emph{The shuffle relation of fractions from multiple zeta values}, Ramanujan J., \textbf{25} (2011), 307--317.
	
	\bibitem{hecke} E.~Hecke, \emph {\"{U}ber die Kroneckersche Grenzformell f\"{u}r reelle quadratische K\"{o}rper und die Klassenzhal relative-Abelscher Körper}, Verhandl. d. Naturforschenden Gesell. i. Basel {\bf 28} (1917), 363–-372.
	
	\bibitem{herglotz} 
	G.~Herglotz,  \emph {\"{U}ber die Kroneckersche Grenzformel f\"{u}r reele, quadratische K\"{o}rper I}, Ber. Ver	handl. \"{S}achsischen Akad. Wiss. Leipzig {\bf 75} (1923), 3--14.
	
	
	\bibitem{Komori} Y.~Komori, \textit{An integral representation of the Mordell-Tornheim double zeta function and its values at non-positive integers}, Ramanujan J. \textbf{17} (2008), 163--183.
	
	
	\bibitem{matsumoto}
	K. Matsumoto, \emph{On analytic continuation of various multiple zeta-functions}, Surveys in Number Theory: Papers from the Millennial Conference on Number Theory. AK Peters: CRC Press, 2002.
	
	\bibitem{dilip}
	K.~Matsumoto, K.~Onodera and D.~K.~Sahoo, \emph{Mordell-Tornheim multiple zeta-functions, their integral analogues, and relations among multiple polylogarithms}, arXiv preprint (arXiv:2409.19980v2), 2024.
	
	\bibitem{onodera}
	K.~Onodera, \textit{A functional relation for Tornheim’s double zeta functions}, Acta Arith. \textbf{162} (2014), no. 4, 337--354,
	2014.
	
	\bibitem{rama}
	K.~Ramachandra, \emph{Some applications of Kronecker's limit formulas}, Ann. of Math.,  {\bf 80} (1964), no.~1, 104--148.
	
	\bibitem{romik}
	D.~Romik, \emph{On the number of $n$-dimensional representations of $\textup{SU}(3)$, the Bernoulli numbers, and the Witten zeta function}, Acta Arith.~\textbf{180} (2017), no. 2, 111--159.
	
	\bibitem{ruiz}
	S.~M.~Ruiz, \emph{An algebraic identity leading to Wilson’s theorem}, 	Math. Gaz., {\bf 80} (1996), no.~489, 579–-582.
	\bibitem{siegel}
   C.~ L.~Siegel, \emph{Bernoullische Polynome und quadratische Zahlk\"{o}rper}, Collected Papers IV, p. 9-40, 1968.
	\bibitem{Spivey}
	M.~Z.~Spivey, \emph{Combinatorial sums and finite differences}, Discrete Math., {\bf 307} (2007), no.~24, 3130--3146.
	
	
	\bibitem{stark}
	 H.~M.~ Stark, \emph{Values of $L$-functions at $s = 1$. I. $L$-functions for quadratic forms}, Adv. Math. {\bf 7(3)} (1971), 301–-343.
	 
	\bibitem{temme}
	N.~M.~Temme, \emph{Special functions: An introduction to the classical functions of mathematical physics}, Wiley-Interscience Publication, New York, 1996.
	
	
	\bibitem{vz}
	M.~Vlasenko and D.~Zagier, \emph{Higher Kronecker ``limit" formulas for real quadratic fields}, J. Reine Angew. Math.~\textbf{679} (2013), 23--64.
	
	\bibitem{zagier}
	D.~Zagier, \emph{A Kronecker limit formula for real quadratic fields}, Math.~Ann.~\textbf{213} (1975), 153--184.
	
	\bibitem{zagier-witten}
	D.~Zagier, \emph{Values of zeta functions and their applications}, in Proc. First Congress of Math., Paris, vol. II, Progress in Math., vol. 120, Birkh\"{a}user, 1994, 497--512.
	
	\bibitem{zhao}
	J.~Q.~Zhao, \textit{Analytic continuation of multiple zeta function}, Proc. Amer. Math. Soc. \textbf{128} (2000), 1275--1283.
\end{thebibliography}
\end{document}